\titleformat*{\section}{\LARGE\bfseries}
\titleformat*{\subsection}{\Large\bfseries}
\titleformat*{\subsubsection}{\large\bfseries}
\newtheorem{definition}{Definition}[section]
\newtheorem{example}{Example}[section]
\newtheorem{theorem}{Theorem}[section]
\newtheorem{lemma}{Lemma}[section]
\newtheorem{remark}{Remark}[section]
\begin{document}

\title{ On some properties of $\omega$-uniqueness in tensor complementarity problem }

\author{A. Dutta$^{a,1}$, R. Deb$^{a,2}$, A. K. Das$^{b,3}$\\
\emph{\small $^{a}$Jadavpur University, Kolkata , 700 032, India.}\\	
\emph{\small $^{b}$Indian Statistical Institute, 203 B. T.
	Road, Kolkata, 700 108, India.}\\
\emph{\small $^{1}$Email: aritradutta001@gmail.com}\\
\emph{\small $^{2}$Email: rony.knc.ju@gmail.com}\\
\emph{\small $^{3}$Email: akdas@isical.ac.in}\\
}

\date{}

\maketitle

\begin{abstract}
\noindent In this article we introduce column adequate tensor in the context of tensor complementarity problem and consider some important properties. The tensor complementarity problem is a class of nonlinear complematarity problems with the involved function being defined by a tensor. We establish the inheritance property and invariant property of column adequate tensors. We show that TCP$(q,\mathcal{A})$ has $\omega$-unique solution under some assumptions.
\\

\noindent{\bf Keywords:} Tensor complementarity problem, column adequate tensor, column sufficient tensor, $P_0$-tensor, $\omega$-unique solution.\\

\noindent{\bf AMS subject classifications:} 90C33, 90C23, 15A69, 46G25.
\end{abstract}
\footnotetext[1]{Corresponding author}

\section{Introduction}
The tensor complementarity problem is a class of nonlinear complematarity problems with the involved function being defined by a tensor, which is also direct and natural extension of the linear complementarity problem. During last several years, the tensor complementarity problem attains much attraction and has been studied extensively with respect to theory, to solution methods and applications. In recent years, various tensors with special structures have been studied. For details, see \cite{qi2017tensor} and \cite{song2015properties}. The tensor complementarity problem was studied initially by Song and Qi \cite{song2014properties}.
The tensor complementarity problem is a subclass of the non-linear complementarity problems where the function involved in the non-linear complementarity problem is a special polynomial defined by a tensor in the tensor complementarity problem. The polynomial functions used in tensor complementarity problems have some special structures.
\\
 For a given mapping $F: \mathbb{R}^n \mapsto \mathbb{R}^n$ the complementarity problem is to find a vector $x\in \mathbb{R}^n $ such that
\begin{equation}\label{Classical comp problem}
   x\geq 0, ~~F(x) \geq 0, ~~ \mbox{and}~ x^{T}F(x)=0.
\end{equation}
If $F$ is nonlinear mapping, then the problem (\ref{Classical comp problem}) is called a nonlinear complementarity problem \cite{facchinei2007finite}, and if $F$ is linear function, then the problem (\ref{Classical comp problem}) reduces to a linear complementarity problem \cite{cottle2009linear}. The linear complementarity problem may be defined as follows:\\
Given a matrix $ M\in \mathbb{R}^{n\times n} $ and a vector $q \in \mathbb{R}^n$, the linear complementarity problem \cite{cottle2009linear}, denoted by $LCP(q,M)$, is to find a pair of vectors $w,z \in\mathbb{R}^n$ such that 
\begin{equation}\label{linear complementarity problem}
    z\geq 0, ~~~ w=Mz+q\geq 0, ~~~ z^T w=0.
\end{equation}
A pair of vectors $(w,z)$ satisfying (\ref{linear complementarity problem}) is called a solution of the $LCP(q,M)$.  A vector $z$ is called a $z$-solution if there exists a vector $w$ such that $(w,z)$ is a solution of the $LCP(q,M)$. Similarly vector $w$ is called a $w$-solution if there exists a vector $z$ such that $(w,z)$ is a solution of the $LCP(q,M)$. The solution $(w, z)$ is said to be $w$-unique if all solution pairs $(w, x)$, have the same $w$, for a given $q$. Ingleton \cite{ingleton1966probelm}, \cite{ingleton1970linear} studied the $w$-uniqueness of solutions to linear complementarity problem.

\noindent The algorithm based on principal pivot transforms namely Lemke's algorithm, Criss-cross algorithm which are used to find the solutions of linear complementarity problem are studied extensively considering several matrix classes. For details, see \cite{jana2018processability}, \cite{das2017finiteness}, \cite{ neogy2005principal}, \cite{das2016properties}, \cite{neogy2012generalized}, \cite{jana2019hidden}, \cite{jana2021more}, \cite{mohan2001more}. In this context properties of matrix classes ensure the processability of $LCP(A,q)$. For details see \cite{neogy2006some}, \cite{ neogy2013weak}, \cite{ neogy2005almost}, \cite{ neogy2011singular}, \cite{neogy2005principal}, \cite{neogy2006some}. Complementarity theory helps to establish the connection between KKT point and the optimal point for structured stochastic game and multi-objective program problem. For details see \cite{mondal2016discounted}, \cite{ neogy2008mathematical}, \cite{ neogy2008mixture}, \cite{ neogy2005linear}, \cite{ neogy2016optimization}, \cite{ mohan2004note}, \cite{ neogy2009modeling}.\\
Now we consider the case of $F(x)=\mathcal{A}x^{m-1} + q$ with $\mathcal{A}\in T_{m,n}$ and $ q\in \mathbb{R}^n$ then the problem (\ref{Classical comp problem}) becomes
\begin{equation}\label{ Tensor Complementarity problem}
    x\geq 0, ~~~\mathcal{A}x^{m-1} + q\geq 0, ~~~\mbox{and}~~ x^{T}(\mathcal{A}x^{m-1}+q)=0
\end{equation}
which is called a tensor complementarity problem, denoted by the $TCP(q,\mathcal{A})$.
Denote $\omega=\mathcal{A}x^{m-1} + q$, then the tensor complementarity problem is to find $x$ such that 
\begin{equation}\label{ Tensor Complementarity problem 2}
    x\geq 0, ~~~ \omega= \mathcal{A}x^{m-1} + q\geq 0, ~~~\mbox{and}~~ x^{T}\omega=0.
\end{equation}
A pair of vectors $(\omega, x)$ satisfying (\ref{ Tensor Complementarity problem 2}) is called a solution of the $TCP(q,\mathcal{A})$.  A vector $x$ is called a $x$-solution if there exists a vector $\omega$ such that $(\omega, x)$ is a solution of the $TCP(q,\mathcal{A})$. Similarly vector $\omega$ is called a $\omega$-solution if there exists a vector $x$ such that $(\omega, x)$ is a solution of the $TCP(q,\mathcal{A})$. The solution $(\omega, x)$ is said to be $\omega$-unique if all solution pairs $(\omega, x)$, have the same $\omega$, for a given $q$.

\noindent Motivated by the discussion on positive definiteness of multivariate homogeneous polynomial forms \cite{bose1976general}, \cite{hasan1996procedure}, \cite{jury1981positivity}, Qi \cite{qi2005eigenvalues} introduced the concept of symmetric positive definite (positive semi-definite) tensors. Song and Qi \cite{song2015properties} studied $P(P_0)$-tensors and $B(B_0)$-tensors.
The equivalence between (strictly) semi-positive tensors and (strictly) copositive tensors in symmetric case   were shown by Song and Qi \cite{song2016tensor}. The existence and uniqueness of solution of $TCP(q,\mathcal{A})$ with some special tensors were discussed by Che, Qi, Wei \cite{che2016positive}. The boundedness of the solution set of the $TCP(q,\mathcal{A})$ was studied by Song and Yu \cite{song2016properties}. The sparse solutions to $TCP(q,\mathcal{A})$ with a $Z$-tensor and its method to calculate were obtained by Luo, Qi and Xiu \cite{luo2017sparsest}. The equivalent conditions of solution to $TCP(q, \mathcal{A})$ were shown by Gowda, Luo, Qi and Xiu \cite{gowda2015z} for a $Z$-tensor $\mathcal{A}$. The global uniqueness of solution of $TCP(q, \mathcal{A})$ was considered by Bai, Huang and Wang \cite{bai2016global} for a strong $P$-tensor $\mathcal{A}$.
The properties of $TCP(q, \mathcal{A})$ was studied by Ding, Luo and Qi \cite{ding2018p} for a new class of $P$-tensor. The properties of the several classes of $Q$-tensors were presented by Suo and Wang \cite{huang2015q}. In this article we introduce column adequate tensor in the context of tensor complementarity problem and study different properties of this tensor.

The paper is organized as follows. Section 2 presents some basic notations and results. In section 3, we introduce the column adequate tensors and study tensor properties as well as the properties of $\omega$-solution of $TCP(q, \mathcal{A})$. In this context, we consider auxiliary matrix formation. We establish the inheritance property and invariant property of column adequate tensors.

\section{Preliminaries}

We begin by introducing some basic notations used in this paper. We consider tensor, matrices and vectors with real entries. Let $m$th order $n$ dimensional real tensor $\mathcal{A}= (a_{i_1 i_2 ... i_m}) $ be a multidimensional array of entries $a_{i_1 i_2 ... i_m} \in \mathbb{R}$ where $i_j \in [n]$ with $j\in [m]$. $T_{m,n}$ denotes the set of real tensors of order $m$ and dimension $n.$ Any $\mathcal{A}= (a_{i_1 i_2 ... i_m}) \in T_{m,n} $ is called a symmetric tensor, if the entries $a_{i_1 i_2 ... i_m}$ are invariant under any permutation of their indices. $S_{m,n}$ denotes the collection of all symmetric tensors of order $m$ and dimension $n$  where $m$ and $n$ are two given positive integers with $m,n\geq 2$. 
An identity tensor $\mathcal{I}m=(\delta_{i_1 ... i_m})\in T_{m,n}$ is defined as follows:
\[ \delta_{i_1 ... i_m}= \left\{
\begin{array}{ll}
	  1  &;\; i_1= ...= i_m, \\
	  0  &;\; else.
	   \end{array}
 \right. \]
For any positive integer $n,$  $[n]$ denotes set of $\{ 1,2,...,n \}$. All vectors are column vectors. Let $\mathbb{R}^n$ denote the $n$-dimensional Euclidean space and $\mathbb{R}^n_+ :=\{ x\in \mathbb{R}^n : x\geq 0 \}$. Let $\mathbb{Z}$ denote the set of integers and $\mathbb{Z}^n_+ :=\{ x\in \mathbb{Z}^n : x\geq 0 \}$. For $\alpha=(\alpha_1, \alpha_2,...,\alpha_n)\in \mathbb{Z}^n_+$, $|\alpha|$ be defined as $|\alpha| = \alpha_1 + \alpha_2 + \cdot \cdot \cdot +\alpha_n.$ For any $x \in \mathbb{R}^n$, let $x^{[m]}\in \mathbb{R}^n$ with its $i$th component being $x^m_i$ for all $i \in [n]$. For $\mathcal{A}\in T_{m,n} $ and $x\in \mathbb{R},\; \mathcal{A}x^{m-1}\in \mathbb{R}^n $ is a vector defined by
\[ (\mathcal{A}x^{m-1})_i = \sum_{i_2, i_3, ...i_m =1}^{n} a_{i i_2 i_3 ...i_m} x_{i_2} x_{i_3} \cdot\cdot \cdot x_{i_m} , ~~~\mbox{for all}~i \in [n] \]
and $\mathcal{A}x^m\in \mathbb{R} $ is a scalar defined by
\[ \mathcal{A}x^m = \sum_{i_1,i_2, i_3, ...i_m =1}^{n} a_{i_1 i_2 i_3 ...i_m} x_{i_1} x_{i_2} \cdot\cdot \cdot x_{i_m}. \]

\noindent The general product of tensors was introduced by Shao (\cite{shao2013general}). Let $\mathcal{A}$ and $\mathcal{B}$ be order $m \geq 2$ and order $k \geq 1$, $n$-dimensional tensor, respectively. The product $\mathcal{A} \cdot \mathcal{B}$ is a tensor $\mathcal{C}$ of order $(m-1)(k-1) + 1$ and $n$-dimensional with entries 
\[c_{i \alpha_1 \cdots \alpha_{m-1} } =\sum_{i_2, \cdots ,i_m \in[n]} a_{i i_2 \cdots i_m} b_{i_2 \alpha_1} \cdots b_{i_m \alpha_{m-1}},\] where $i \in [n]$, $\alpha_1, \cdots, \alpha_{m-1} \in [n]^{k-1}$. 

\noindent Let $\mathbb{R}[x_1, . . . , x_n]$ denotes the set of all polynomials in $x_1, \; x_2\; ...\; x_n$ with coefficients in $\mathbb{R}$. Any monomial in $\mathbb{R}[x_1, . . . , x_n]$ is denoted by $x^{\alpha}=x^{\alpha_1}_1 \cdots x^{\alpha_n}_n ,$ where the $n$-tuple of exponents $\alpha=(\alpha_1 \cdots \alpha_n) \in \mathbb{Z}^n_+ $. There is a one-to-one correspondence between the monomials in $\mathbb{R}[x_1, . . . , x_n]$ and $\mathbb{Z}^n_+$. Furthermore, any ordering $>$ on the space $\mathbb{Z}^n_+$ provide an ordering on monomials i.e. if $\alpha > \beta $ according to this ordering implies $x^{\alpha} > x^{\beta} $. Now we consider some definitions and results which are used in the next section. 

\begin{definition}\cite{cox2013ideals}
(Lexicographic Order). Let $\alpha = (\alpha_1, \cdots, \alpha_n)$ and $\beta = (\beta_1, \cdots, \beta_n)$ be in $\mathbb{Z}^n_+$. We say $\alpha >_{lex} \beta$ if the leftmost nonzero entry of the vector difference $\alpha -\beta \in \mathbb{Z}^n $ is positive. We write $x^{\alpha} >_{lex} x^{\beta} $ if $\alpha >_{lex} \beta$.
\end{definition}
\noindent Example 1. Let $\alpha = (1, 2, 0)$ and $\beta = (0, 3, 4) $. Then $(1, 2, 0) >_{lex} (0, 3, 4)$, since $\alpha -\beta = (1,-1,-4)$.

\noindent Example 2. Let $\alpha = (3, 2, 4) $ and $\beta = (3, 2, 1) $. Then $(3, 2, 4) >_{lex} (3, 2, 1)$, since $\alpha -\beta = (0, 0, 3)$.

\noindent Example 3. Naturally $x_1 >_{lex} x_2 >_{lex} \cdots >_{lex} x_n$.

\begin{definition}\cite{cox2013ideals}
(Graded Lexicographic Order).  Let $\alpha, \beta \in \mathbb{Z}^n_+$. We say $\alpha >_{grlex} \beta$  if
\[|\alpha|= \sum_{i=1}^{n} \alpha_i > |\beta|= \sum_{i=1}^{n} \beta_i, \mbox{ or } |\alpha|= |\beta| \mbox{ and } \alpha >_{lex} \beta .\]
\end{definition}

\noindent Example 1. Let $\alpha = (1, 2, 3) $ and $\beta = (3, 2, 0) $. Then $(1, 2, 3) >_{grlex} (3, 2, 0)$, since $|(1, 2, 3)| = 6 > |(3, 2, 0)| = 5$.

\noindent Example 2. Let $\alpha = (1, 2, 4) $ and $\beta = (1, 1, 5) $. Then $(1, 2, 4) >_{grlex} (1, 1, 5)$, since $|(1, 2, 4)| = |(1, 1, 5)|$ and $(1, 2, 4) >_{lex} (1, 1, 5)$.

\noindent Example 3. Consider the monomials $x_1^2,\; x_2^2,\; x_3^2,\; x_1x_2,\; x_2x_3,\; x_3x_1 $. Then in grlex order they are arranged as 
    \[ x_1^2 >_{grlex} x_1x_2 >_{grlex} x_1x_3 >_{grlex} x_2^2 >_{grlex} x_2x_3 >_{grlex} x_3^2 .\]

\begin{definition} \cite{ingleton1966probelm}
A matrix $M\in \mathbb{R}^{n\times n}$ is said to be column adequate matrix if 
\[ z_i (Mz)_i \leq 0 \; \; \mbox{for all } i=1,2,...,n \;\; \implies \; Mz=0.  \]
\end{definition}

\begin{definition}\cite{song2015properties}
Let $\mathcal{A}= (a_{i_1 i_2 ...i_m}) \in T_{m,n}$. Let $J\subseteq [n]$ with $|J|=r,\; 1\leq r \leq n$. Then a principal sub-tensor of $\mathcal{A}$ is denoted by $\mathcal{A}^J_r$ and is defined as
\[ \mathcal{A}^J_r = ( a_{i_1 i_2 ...i_m} ), \; \forall \; i_1, i_2,...i_m \in J .\]
\end{definition}
\begin{example}
Let $\mathcal{A}\in T_{3,3}$ be such that $a_{111}=2,\; a_{222}=-1,\; a_{333}=2,\; a_{223}=-2,\; a_{232}=1,\; a_{233}=-1 $ and all other entries of $\mathcal{A}$ are zeros.

\noindent For $r=1$, there are three principal sub-tensors of one dimension. Let $J_1 = \{1\}$, $J_2 = \{2\}$ and $J_3 = \{3\}$. Then  principal sub-tensors of one dimension are $\mathcal{A}^{J_{1}}_1 = (a_{111})=(2)$, $\mathcal{A}^{J_{2}}_1 = (a_{222})=(-1)$, $\mathcal{A}^{J_{3}}_1 = (a_{333})=(3)$.

\noindent For $r=2$, there are three principal sub-tensors of two dimension. Let $J_4 = \{1,2\}$, $J_5 = \{2,3\}$, $J_6 = \{1,3\}$. The principal sub-tensors of two dimension are $\mathcal{A}^{J_{4}}_2, \; \mathcal{A}^{J_{5}}_2, \; \mathcal{A}^{J_{6}}_2$. Here 
$ \mathcal{A}^{J_{4}}_2 = (a_{ijk}), \; \forall \; i,j,k \in J_{4}.$
Then $\mathcal{A}^{J_{4}}_2$ is given by $a_{111}=2,\; a_{222}=-1 $ and all other entries of $\mathcal{A}^{J_{4}}_2$ are zeros. Here 
$ \mathcal{A}^{J_{5}}_2 = (a_{ijk}), \; \forall \; i,j,k \in J_{5}.$
Then $\mathcal{A}^{J_{5}}_2$ is given by $a_{222}=-1,\; a_{333}=2,\; a_{223}=-2,\; a_{232}=1,\; a_{233}=-1 $ and all other entries of $\mathcal{A}^{J_{5}}_2$ are zeros. Here 
$ \mathcal{A}^{J_{6}}_2 = (a_{ijk}), \; \forall \; i,j,k \in J_{6}$.
Then $\mathcal{A}^{J_{6}}_2$ is given by $ a_{222}=-1, \; a_{333}=2 $ and all other entries of $\mathcal{A}^{J_{6}}_2$ are zeros.

\noindent For $r=3$, there is only one principal sub-tensor of three dimension which coincides with $\mathcal{A}$.
\end{example}

\begin{definition}
\cite{song2016properties} Given $\mathcal{A}= (a_{i_1 i_2 ... i_m}) \in T_{m,n} $ and $q\in \mathbb{R}^n$, a vector $x$ is said to be (strictly) feasible, if $x(>) \geq 0$ and $\mathcal{A}x^{m-1}+q (>) \geq 0$.

\noindent $TCP(q,\mathcal{A})$, defined by equation (\ref{ Tensor Complementarity problem}) is said to be (strictly) feasible if a (strictly) feasible vector exists. 

\noindent $TCP(q,\mathcal{A})$ is said to be solvable if there is a feasible vector $x$ satisfying $x^{T}(\mathcal{A}x^{m-1}+q)=0$ and $x$ is the solution.
\end{definition}

\begin{definition}
\cite{qi2005eigenvalues} A tensor $\mathcal{A}= (a_{i_1 i_2 ... i_m}) \in T_{m,n} $ is said to be a positive definite (positive semi-definite), if $\mathcal{A}x^m > (\geq)0 $ for all $x\in \mathbb{R}^n \backslash \{0\}$. Furthermore if $\mathcal{A}\in S_{m,n}$ and satisfies the above condition then $\mathcal{A}$ is said to be a symmetric positive definite (positive semi-definite) tensor.
\end{definition}

\begin{definition}
\cite{song2014properties} A tensor $\mathcal{A}= (a_{i_1 i_2 ... i_m}) \in T_{m,n} $ is said to be (strictly) semi-positive tensor if for each $x\in \mathbb{R}^n_+ \backslash \{0\}$, there exists an index $i\in [n]$ such that $x_i>0$ and $(\mathcal{A}x^{m-1})_i (>)\geq 0$.
\end{definition}

\begin{definition}
\cite{song2015properties} A tensor $\mathcal{A}= (a_{i_1 i_2 ... i_m}) \in T_{m,n} $ is said to be a $P(P_0)$-tensor, if for each $x\in \mathbb{R}^n \backslash \{0\}$, there exists an index $i\in [n]$ such that $x_i \neq 0$ and $x_i (\mathcal{A}x^{m-1})_i > (\geq 0)$.
\end{definition}

\begin{definition}\cite{bai2016global}
A tensor $\mathcal{A}\in T_{m,n}$ is said to be strong $P$-tensor if for any two different $x=(x_i)$ and $y=(y_i)$ in $\mathbb{R}^n$, $max_{1 \leq i \leq n} (x_i - y_i)(\mathcal{A}x^{m-1} - \mathcal{A}y^{m-1} )_i >0$.
\end{definition}

\begin{definition}
\cite{huang2015q} A tensor $\mathcal{A}= (a_{i_1 i_2 ... i_m}) \in T_{m,n} $ is said to be a $Q$-tensor if the $TCP(q,\mathcal{A})$ (\ref{ Tensor Complementarity problem}) is solvable for all $q\in \mathbb{R}^n $.
\end{definition}

\begin{definition}
\cite{chen2018column} A tensor $\mathcal{A}= (a_{i_1 i_2 ... i_m}) \in T_{m,n} $ is said to be a column sufficient tensor, if for $x \in \mathbb{R}^n$, $x_i(\mathcal{A}x^{m-1})_i \leq 0,\; \forall \; i\in [n] ~\implies$ $x_i(\mathcal{A}x^{m-1})_i =0, \; \forall \; i\in [n]$.
\end{definition}

\begin{definition}\cite{huang2015q}
A tensor $\mathcal{A}\in T_{m,n}$ is said to be strong $P_0$-tensor if for any $x=(x_i)$ and $y=(y_i)$ in $\mathbb{R}^n$ with $x \neq y$, $max_{1 \leq i \leq n} (x_i - y_i)(\mathcal{A}x^{m-1} - \mathcal{A}y^{m-1} )_i \geq0$. It is abbreviated as $SP_0$-tensor. The set of all real $m$-order $n$-dimensional $SP_0$-tensors are denoted by $T_{m,n}\cap SP_0$.
\end{definition}

\begin{definition}
\cite{song2015properties} A tensor $\mathcal{A}= (a_{i_1 i_2 ... i_m}) \in T_{m,n} $ is said to be weak $P(P_0)$-tensor, if for each $x\in \mathbb{R}^n \backslash {0}$, there exists an index $i\in [n]$ such that $x_i \neq 0$ and $x_i^{m-1} (\mathcal{A}x^{m-1})_i > (\geq 0)$.
\end{definition}
\begin{remark}
For even order tensors the set of $P_0$-tensors and the set of weak $P_0$-tensors are same.
\end{remark}

\begin{definition}\cite{shao2016some}
Let $\mathcal{A} \in T_{m,n}$ and $R_i(\mathcal{A})= (r_{i i_2 ...i_m })_{i_2 ...i_m}^n \in T_{m-1,n}$ such that $r_{i i_2 ... i_m} = a_{i i_2 ...i_m} $, then $\mathcal{A}$ is called row-subtensor diagonal, or simply row diagonal, if all its row-subtensors $R_i(\mathcal{A}), \; i=1,2,...n$ are diagonal tensors, namely, if $a_{i i_2 ...i_m}$ can take nonzero values only when $i_2 = \cdot \cdot \cdot = i_m$.
\end{definition}
\begin{example}
Let $\mathcal{A}\in T_{4,2}$ such that $a_{1111}=3,\; a_{2222}=1,\; a_{1222}=-2,\; a_{2111}=1\;$ and all other entries of $\mathcal{A}$ are zeros. Then $\mathcal{A}$ is row-diagonal tensor.
\end{example}

\begin{definition}\cite{pearson2010essentially}
Let $\mathcal{A} \in T_{m,n}$. The majorization matrix $M(\mathcal{A})$ of $\mathcal{A}$ is $n\times n$ matrix with entries $M(\mathcal{A})_{i j} = a_{ijj...j}$, where $i,j=1,2,...,n $.
\end{definition}
\begin{example}
Let $\mathcal{A}\in T_{4,2}$, be such that $a_{1111}= 1,\;a_{1222}= -1,\; a_{2112}= 3,\; a_{2111}= 0,\; a_{1112}= -2,\; a_{2222}= 2\;$ and all other entries of $\mathcal{A}$ are zeros. Then $M(\mathcal{A})=\left( \begin{array}{cc}
    1 & -1\\
    0 & 2
\end{array} \right)$.
\end{example}
\begin{theorem}\cite{shao2016some}
Let $\mathcal{A}$ be an order $m$ and dimension $n$ tensor. Then $\mathcal{A}$ is row diagonal if and only if $\mathcal{A}=M(\mathcal{A})\mathcal{I}_m,$ where $\mathcal{I}_m$ is the identity tensor of order $m$ and dimension $n$.
\end{theorem}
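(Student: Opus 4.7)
The plan is to prove both directions by direct entrywise comparison, reducing everything to the Shao general product formula applied to a matrix (order $2$) and the identity tensor $\mathcal{I}_m$ (order $m$). Since the result is essentially a computation, the work lies in unwinding the definitions cleanly rather than in finding a clever idea.

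First, I would fix notation and compute the right-hand side once and for all. Set $M=M(\mathcal{A})$, so $M$ is an order-$2$, $n$-dimensional tensor. Applying the general product formula with $\mathcal{A}\leftarrow M$ (of order $m'=2$) and $\mathcal{B}\leftarrow \mathcal{I}_m$ (of order $k=m$), the resulting tensor $M\cdot\mathcal{I}_m$ has order $(m'-1)(k-1)+1=m$ and entries
\[
(M\cdot\mathcal{I}_m)_{i\,j_2\cdots j_m}
=\sum_{i_2\in[n]} M_{i\,i_2}\,(\mathcal{I}_m)_{i_2\,j_2\cdots j_m}
=\sum_{i_2\in[n]} M_{i\,i_2}\,\delta_{i_2 j_2\cdots j_m}.
\]
By the definition of $\mathcal{I}_m$, the Kronecker delta $\delta_{i_2 j_2\cdots j_m}$ equals $1$ precisely when $i_2=j_2=\cdots=j_m$ and is $0$ otherwise. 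Hence the sum collapses: if $j_2=\cdots=j_m=:j$, the surviving term is $M_{ij}=a_{ijj\cdots j}$, and if the $j_\ell$'s are not all equal, the entry is $0$.

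For the forward implication, assume $\mathcal{A}$ is row diagonal. Then $a_{i\,j_2\cdots j_m}$ vanishes unless $j_2=\cdots=j_m$; and when $j_2=\cdots=j_m=j$, the entry $a_{ijj\cdots j}$ coincides with $M(\mathcal{A})_{ij}$ by definition of the majorization matrix. Comparing with the computation above, $\mathcal{A}$ and $M(\mathcal{A})\cdot\mathcal{I}_m$ agree in every entry, giving the identity. For the reverse implication, assume $\mathcal{A}=M(\mathcal{A})\cdot\mathcal{I}_m$; then the same computation forces $a_{i\,j_2\cdots j_m}=0$ whenever the indices $j_2,\dots,j_m$ are not all equal, which is exactly the row-diagonality condition.

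There is no real obstacle here beyond bookkeeping: the one pitfall is keeping the multi-index $\alpha_1=(j_2,\dots,j_m)\in[n]^{m-1}$ straight when matching it to the shape of $M\cdot\mathcal{I}_m$, and verifying that the identity tensor of order $m$ used inside the product produces the correct all-indices-equal condition rather than merely a pairwise one. Once that is handled, both directions fall out of the single display equation above.
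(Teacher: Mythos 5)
Your proof is correct. The paper itself does not prove this statement --- it is quoted from Shao's work \cite{shao2016some} as a known preliminary --- and your argument is the standard one: instantiating the general product formula with an order-$2$ first factor so that the single multi-index $\alpha_1\in[n]^{m-1}$ carries all trailing indices, collapsing the sum via the Kronecker delta of $\mathcal{I}_m$, and then reading off both implications by entrywise comparison. Nothing is missing.
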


\begin{theorem}\cite{bai2016global}
For any $q\in \mathbb{R}^n$ and a $P$-tensor $\mathcal{A} \in T_{m,n}$, the solution set of $TCP(q,\mathcal{A})$ is nonempty and compact.
\end{theorem}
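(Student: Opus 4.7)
The plan is to prove nonemptiness and compactness as two separate pieces, with boundedness of the solution set being the technical heart of the argument. The $P$-tensor hypothesis enters precisely to rule out any escape of solutions to infinity, both for the original $q$ and for a convenient homotopy used to produce a solution.

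First I would establish boundedness. Suppose, for contradiction, that $\{x^k\}\subset\mathbb{R}^n$ is a sequence of solutions of $TCP(q,\mathcal{A})$ with $\|x^k\|\to\infty$. Setting $y^k:=x^k/\|x^k\|$ and passing to a subsequence, I may assume $y^k\to y$ with $y\geq 0$ and $\|y\|=1$. Using the $(m-1)$-homogeneity identity $\mathcal{A}(x^k)^{m-1}=\|x^k\|^{m-1}\mathcal{A}(y^k)^{m-1}$, the feasibility condition $\mathcal{A}(x^k)^{m-1}+q\geq 0$ divided by $\|x^k\|^{m-1}$ yields $\mathcal{A}y^{m-1}\geq 0$ in the limit. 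Similarly, the complementarity relations $x^k_i\bigl((\mathcal{A}(x^k)^{m-1})_i+q_i\bigr)=0$, after dividing by $\|x^k\|^{m}$ and taking the limit, give $y_i(\mathcal{A}y^{m-1})_i=0$ for every coordinate $i$. But a nonzero $y\geq 0$ with $y_i(\mathcal{A}y^{m-1})_i=0$ at every $i$ directly contradicts the defining property of a $P$-tensor, which demands some index $i$ with $y_i\neq 0$ and $y_i(\mathcal{A}y^{m-1})_i>0$. Closedness of the solution set follows at once because the defining relations are continuous in $x$, so the set is in fact compact.

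For nonemptiness I would use a degree-theoretic homotopy. Define $H:\mathbb{R}^n\times[0,1]\to\mathbb{R}^n$ componentwise by $H_i(x,t):=\min\{x_i,\ (\mathcal{A}x^{m-1})_i+tq_i+(1-t)\}$, so zeros of $H(\cdot,t)$ are exactly solutions of $TCP\bigl(tq+(1-t)e,\mathcal{A}\bigr)$ with $e=(1,\dots,1)^T$. At $t=0$ the unique zero is $x=0$, and a Clarke-generalized-Jacobian computation exploiting the $P$-tensor property shows that the local topological degree of $H(\cdot,0)$ at the origin is $\pm 1$. The boundedness argument above, reapplied uniformly in $t$ with $tq+(1-t)e$ in place of $q$, produces a radius $R$ such that $H(\cdot,t)$ has no zero on $\partial B_R$ for any $t\in[0,1]$. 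The homotopy is therefore admissible, the degree is preserved, and $H(\cdot,1)$ must have a zero — a solution of $TCP(q,\mathcal{A})$.

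The main obstacles I anticipate are twofold. The easier one is making the boundedness step uniform in $t$: it is a straightforward rerun of the limit argument, since only the $P$-tensor property (which does not depend on the right-hand side) is actually used, but one must check that the rescaled constant term $(tq_i+1-t)/\|x^k\|^{m-1}$ still vanishes in the limit. The more substantial obstacle is verifying that the local degree of $H(\cdot,0)$ at the origin is nonzero: this rests on translating the $P$-tensor condition into positivity of the appropriate ``principal'' subdeterminants of the Clarke Jacobian of $H$, exactly parallel to the matrix case but with polynomial bookkeeping that has to be done carefully. Once both are in place, homotopy invariance delivers existence and the boundedness step delivers compactness, completing the proof.
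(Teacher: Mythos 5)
Your argument is essentially correct, but note that the paper itself offers no proof of this statement: it is imported verbatim from Bai, Huang and Wang \cite{bai2016global} as a preliminary result, so there is no in-paper proof to compare against. Judged on its own, your two-step plan is the standard one for $P$-type complementarity problems. The boundedness step is sound: componentwise complementarity does hold (each $x_i^k\omega_i^k\geq 0$ and they sum to zero), the rescaled constant terms $q_i/\|x^k\|^{m-1}$ vanish because $m\geq 2$, and a unit vector $y\geq 0$ with $y_i(\mathcal{A}y^{m-1})_i=0$ for every $i$ does contradict the $P$-property; this is exactly the statement that a $P$-tensor is an $R_0$-tensor. The homotopy step is also sound, and the uniform-in-$t$ bound works because the right-hand sides $tq+(1-t)e$ range over a compact segment. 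One simplification you are missing: at $t=0$ you do not need any Clarke-Jacobian computation, since for $x$ in a small ball around the origin one has $(\mathcal{A}x^{m-1})_i+1>x_i$ for every $i$, so $H(\cdot,0)$ coincides with the identity there and the local degree is $1$ by inspection (uniqueness of the zero $x=0$ of $H(\cdot,0)$, which you correctly derive from the $P$-property, then lets you excise up to the ball $B_R$). An alternative, more elementary route used in the literature avoids degree theory altogether: a $P$-tensor is strictly semi-positive, strictly semi-positive tensors are $Q$-tensors (Song--Qi), which gives nonemptiness, and the $R_0$-property gives boundedness; your proof is self-contained where that route leans on prior results.
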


\begin{theorem}\cite{bai2016global}
Let $\mathcal{A}\in T_{m,n}$ be a strong $P$-tensor. Then the $TCP(q, \mathcal{A})$ has the property of global uniqueness and solvability property.
\end{theorem}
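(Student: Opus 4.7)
The plan is to establish the two components of the conclusion separately: \emph{solvability} (existence of a solution of $TCP(q,\mathcal{A})$ for every $q$) and \emph{global uniqueness} (at most one solution for each $q$). The two pieces together constitute global uniqueness and solvability.

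For the solvability part, I would first observe that every strong $P$-tensor is automatically a $P$-tensor. Indeed, specializing the strong $P$ condition to $y=0$ gives $\max_{1\le i\le n} x_i(\mathcal{A}x^{m-1})_i > 0$ for every $x \neq 0$; the index $i$ realizing the maximum must then have $x_i \neq 0$, which is exactly the $P$-tensor definition given earlier in the excerpt. Once that observation is in hand, existence is immediate from the previous theorem, which asserts that for any $q\in\mathbb{R}^n$ and any $P$-tensor $\mathcal{A}$ the solution set of $TCP(q,\mathcal{A})$ is nonempty (and, in fact, compact).

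The substantive part is uniqueness, which I would obtain by contradiction. Suppose $x^{(1)}$ and $x^{(2)}$ are two distinct solutions of $TCP(q,\mathcal{A})$, with associated $\omega^{(k)} = \mathcal{A}(x^{(k)})^{m-1}+q \ge 0$ for $k=1,2$. The strong $P$-tensor condition, applied to $x^{(1)} \neq x^{(2)}$, yields some index $i$ with
\[
\bigl(x^{(1)}_i - x^{(2)}_i\bigr)\bigl(\omega^{(1)}_i - \omega^{(2)}_i\bigr) \;=\; \bigl(x^{(1)}_i - x^{(2)}_i\bigr)\bigl(\mathcal{A}(x^{(1)})^{m-1} - \mathcal{A}(x^{(2)})^{m-1}\bigr)_i \;>\; 0.
\]
On the other hand, the complementarity relations $(x^{(k)})^T\omega^{(k)} = 0$ together with $x^{(k)} \ge 0$ and $\omega^{(k)} \ge 0$ force $x^{(k)}_j \omega^{(k)}_j = 0$ for every $k$ and every $j$ (each summand in the inner product is nonnegative). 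Expanding the product coordinatewise gives
\[
(x^{(1)}_i - x^{(2)}_i)(\omega^{(1)}_i - \omega^{(2)}_i) \;=\; x^{(1)}_i\omega^{(1)}_i + x^{(2)}_i\omega^{(2)}_i - x^{(1)}_i\omega^{(2)}_i - x^{(2)}_i\omega^{(1)}_i \;=\; -x^{(1)}_i\omega^{(2)}_i - x^{(2)}_i\omega^{(1)}_i \;\le\; 0,
\]
valid at \emph{every} index $i$. This contradicts the strict inequality obtained from the strong $P$ condition, so $x^{(1)} = x^{(2)}$, completing the uniqueness argument.

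I do not anticipate a serious obstacle. The only mildly delicate step is the inclusion of strong $P$-tensors inside $P$-tensors (needed to invoke the cited existence theorem); this is purely a definition-chase, and the substantive content of the proof is the short complementarity calculation above. Everything else is bookkeeping.
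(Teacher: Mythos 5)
Your proof is correct. The paper does not actually prove this statement --- it is quoted verbatim from Bai, Huang and Wang \cite{bai2016global} as a preliminary --- and your two-step argument (specializing the strong $P$ condition at $y=0$ to land in the $P$-tensor class for existence, then the coordinatewise complementarity expansion $(x^{(1)}_i-x^{(2)}_i)(\omega^{(1)}_i-\omega^{(2)}_i)=-x^{(1)}_i\omega^{(2)}_i-x^{(2)}_i\omega^{(1)}_i\le 0$ for uniqueness) is exactly the standard proof given in that reference.
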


\begin{theorem}\cite{huang2015q}
If $\mathcal{A}\in T_{m,n} \cap SP_0$, we have 
\[ \mathcal{A}\in R_0 \iff \mathcal{A}\in R \iff \mathcal{A}\in ER \iff \mathcal{A}\in Q .\]
\end{theorem}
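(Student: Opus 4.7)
The plan is to establish a cyclic chain of implications $Q \Rightarrow R_0 \Rightarrow R \Rightarrow ER \Rightarrow Q$, all under the standing hypothesis $\mathcal{A} \in T_{m,n} \cap SP_0$. The classes $R_0$, $R$, and $ER$ are the tensor analogs of the corresponding matrix classes from classical LCP theory ($R_0$: only zero solves $TCP(0,\mathcal{A})$; $R$: a regularity condition combining $R_0$ with a boundedness/feasibility clause; $ER$: no exceptional family of elements). For $P_0$-matrices the analogous equivalence is classical, so the task is to port those arguments to the polynomial setting and exploit the $SP_0$ hypothesis wherever monotonicity was used.

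First I would dispatch the easy direction $Q \Rightarrow R_0$. Since $\mathcal{A}$ is in $Q$, $TCP(0,\mathcal{A})$ is automatically solvable; the issue is to rule out nonzero solutions. Given a candidate nonzero solution $x$, apply the defining $SP_0$ inequality to the pair $(x,0)$ to extract an index $i$ with $x_i(\mathcal{A}x^{m-1})_i \ge 0$; combined with the complementarity condition $x_j(\mathcal{A}x^{m-1})_j=0$ for every $j$ and nonnegativity of both $x$ and $\mathcal{A}x^{m-1}$, a pigeonhole on the support of $x$ should force $x = 0$. Next, for $R_0 \Rightarrow R$, the $SP_0$ hypothesis controls the recession behaviour of $TCP(q,\mathcal{A})$, so triviality of the homogeneous problem upgrades to triviality of the mildly perturbed system used to define $R$, paralleling Karamardian's matrix argument. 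The step $R \Rightarrow ER$ is then a contrapositive: an exceptional family, after normalization and passage to a limit, would produce a nonzero solution of $TCP(0,\mathcal{A})$, contradicting $R_0$ and hence $R$.

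The final implication $ER \Rightarrow Q$ is the deep one and I expect it to be the main obstacle. The standard route is a topological-degree or homotopy argument: absence of an exceptional family guarantees that a suitable homotopy from $TCP(q,\mathcal{A})$ to a reference problem with a known solution cannot blow up, so the degree is preserved and solvability persists for every $q \in \mathbb{R}^n$. In the tensor setting this requires the continuous polynomial structure of $x \mapsto \mathcal{A}x^{m-1}$ together with a coerciveness estimate extracted from the $SP_0$ property; formulating the right normalization and verifying the degree condition without simply reusing the matrix argument verbatim is where the real work lies, and is the step I would allot the most space to.
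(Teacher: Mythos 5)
First, a point of reference: the paper does not prove this statement at all --- it is quoted verbatim from \cite{huang2015q} as a preliminary result, so there is no in-paper proof to compare against. Your proposal therefore has to stand on its own, and as written it has a concrete gap in its first step. For $Q \Rightarrow R_0$ you propose to take a nonzero solution $x$ of $TCP(0,\mathcal{A})$, apply the $SP_0$ inequality to the pair $(x,0)$ to get an index $i$ with $x_i(\mathcal{A}x^{m-1})_i \ge 0$, and combine this with complementarity to force $x=0$. But any solution of $TCP(0,\mathcal{A})$ already satisfies $x_j(\mathcal{A}x^{m-1})_j = 0$ for \emph{every} $j$, so the $SP_0$ inequality $\max_i x_i(\mathcal{A}x^{m-1})_i \ge 0$ holds automatically with equality and yields no information whatsoever; there is nothing for a pigeonhole argument to act on. The zero tensor illustrates the failure of the method: it is $SP_0$ and every $x \ge 0$ solves $TCP(0,\mathcal{O})$, so no argument that only inspects $TCP(0,\mathcal{A})$ together with the $SP_0$ inequality can rule out nonzero homogeneous solutions. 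Any correct proof of $Q \Rightarrow R_0$ must genuinely invoke solvability of $TCP(q,\mathcal{A})$ for suitably chosen \emph{nonzero} $q$ (e.g.\ a perturbation built from the nonzero homogeneous solution), which your sketch never does.

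Beyond that, the remaining links of the chain are programmatic rather than proved. The classes $R$, $R_0$ and $ER$ are never defined precisely (neither in the paper nor in your proposal), the step $R_0 \Rightarrow R$ is asserted to ``parallel Karamardian's argument'' without specifying what the $SP_0$ hypothesis contributes, and you explicitly defer the degree-theoretic argument for $ER \Rightarrow Q$, which you yourself identify as the main obstacle. So even setting aside the broken first implication, what you have is an outline of the strategy used in \cite{huang2015q}, not a proof. Since the paper treats this theorem as an imported black box, the appropriate fix here is either to cite it as such or to reproduce the actual argument of \cite{huang2015q} with the definitions of $R_0$, $R$ and $ER$ stated in full.
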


\begin{theorem}\cite{chen2018column}
Let $\mathcal{A}$ is a column sufficient tensor with order $m$ dimension $n$. Then the $TCP(q,\mathcal{A})$ has unique zero solution for every $q \geq 0$.
\end{theorem}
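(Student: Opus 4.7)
The plan is to split the claim into two parts: first, that $x=0$ is always a solution of TCP$(q,\mathcal{A})$ whenever $q\geq 0$, and second, that under column sufficiency no other solution exists. The first part is routine and will serve mainly to fix notation, while the uniqueness argument is where the column sufficient hypothesis will do all the work.

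For existence, I would simply observe that with $x=0$ we have $x\geq 0$, $\omega = \mathcal{A}\cdot 0^{m-1}+q = q\geq 0$, and $x^{T}\omega = 0$, so the pair $(q,0)$ solves TCP$(q,\mathcal{A})$. For uniqueness, I would start from an arbitrary solution $(\omega,x)$ and exploit the standard complementarity observation: since $x\geq 0$, $\omega\geq 0$, and $x^{T}\omega=0$, one has $x_{i}\omega_{i}=0$ coordinate-wise, i.e.\
\[
x_{i}(\mathcal{A}x^{m-1})_{i} \;=\; -\,x_{i}q_{i} \qquad \text{for every } i\in[n].
\]
Because $x_{i}\geq 0$ and $q_{i}\geq 0$, the right-hand side is $\leq 0$, so $x_{i}(\mathcal{A}x^{m-1})_{i}\leq 0$ for every $i$. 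This is precisely the hypothesis in the definition of a column sufficient tensor (Definition 2.8), so I may conclude $x_{i}(\mathcal{A}x^{m-1})_{i}=0$ for all $i$. Feeding this back into the displayed identity yields $x_{i}q_{i}=0$ for every $i$.

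The hard part is the last step: deducing $x=0$ from the relations $x_{i}(\mathcal{A}x^{m-1})_{i}=0$ and $x_{i}q_{i}=0$. On coordinates where $q_{i}>0$ we immediately get $x_{i}=0$, but on coordinates where $q_{i}=0$ a further argument is needed. My plan is to restrict attention to the index set $J=\{i: x_{i}>0\}$, note that on this set $(\mathcal{A}x^{m-1})_{i}=0$ and $q_{i}=0$, and then argue by passing to the principal sub-tensor $\mathcal{A}^{J}_{|J|}$ (Definition 2.5) that a nonzero $x$ restricted to $J$ would violate the column sufficient property on the sub-tensor, contradicting the inheritance/invariance facts the paper announces for column adequate (and analogously sufficient) tensors. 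This reduction to a principal sub-tensor is the main technical step; once it is in place, the contradiction forces $J=\emptyset$, giving $x=0$ and completing the uniqueness proof.
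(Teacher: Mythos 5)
The paper itself does not prove this statement; it is imported from \cite{chen2018column} as a preliminary result, so your proposal has to stand on its own. Your first two paragraphs are correct and are the standard argument: $x=0$ always solves TCP$(q,\mathcal{A})$ when $q\geq 0$, and any solution satisfies $x_i(\mathcal{A}x^{m-1})_i=-x_iq_i\leq 0$ for all $i$, whence column sufficiency gives $x_i(\mathcal{A}x^{m-1})_i=0$ and therefore $x_iq_i=0$ for every $i$.

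The last step, however, cannot be completed, and the principal sub-tensor strategy you sketch will not close the gap: with the hypothesis $q\geq 0$ the statement is actually false, and the paper's own example of a column sufficient tensor that is not column adequate is a counterexample. Take $\mathcal{A}\in T_{4,2}$ with $a_{1112}=-2$, $a_{2111}=a_{2222}=1$ and all other entries zero, so that $\mathcal{A}x^{3}=(-2x_1^2x_2,\; x_1^3+x_2^3)^T$; this tensor is column sufficient. With $q=0\geq 0$, the vector $x=(1,0)^T$ satisfies $x\geq 0$, $\omega=\mathcal{A}x^{3}=(0,1)^T\geq 0$ and $x^T\omega=0$, so TCP$(0,\mathcal{A})$ has a nonzero solution. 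Your proposed contradiction fails here concretely: $J=\{1\}$, the principal sub-tensor $\mathcal{A}^{J}_{1}=(a_{1111})=(0)$ is trivially column sufficient, and $x_J=(1)$ violates nothing, because column sufficiency constrains only the products $x_i(\mathcal{A}x^{m-1})_i$ and never forces $x$ itself to vanish. The statement becomes true, and your own second paragraph already completes the proof, if the hypothesis is strengthened to $q>0$: then $x_iq_i=0$ together with $x_i\geq 0$ immediately yields $x=0$.
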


\begin{theorem}\cite{ingleton1966probelm,ingleton1970linear}\label{matrix w uniqueness result}
Let $A$ be a matrix of order $n$. Then the following conditions are equivalent
\begin{enumerate}[label=(\alph*)]
    \item $A$ is column adequate matrix.
    \item $LCP(q, A)$ has $w$-unique solution.
\end{enumerate}
\end{theorem}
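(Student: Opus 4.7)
The plan is to prove the equivalence by establishing both implications separately, since each direction has a distinct flavor: the forward direction is a short complementarity computation, while the reverse requires a contrapositive construction using the standard positive/negative part decomposition of a vector.

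For the direction (a) $\Rightarrow$ (b), I would pick two arbitrary solutions $(w_1, z_1)$ and $(w_2, z_2)$ of $LCP(q, A)$ and aim to show $A(z_1 - z_2) = 0$, which forces $w_1 = w_2$. The key computation is to expand $(z_1 - z_2)_i (w_1 - w_2)_i$ componentwise. Using $z_j \geq 0$, $w_j \geq 0$, and $z_j^T w_j = 0$, each cross term $z_{1i} w_{1i}$ and $z_{2i} w_{2i}$ vanishes, while the remaining terms $-z_{1i} w_{2i}$ and $-z_{2i} w_{1i}$ are nonpositive. Since $w_1 - w_2 = A(z_1 - z_2)$, this gives $(z_1 - z_2)_i [A(z_1 - z_2)]_i \leq 0$ for every $i$, so column adequacy of $A$ forces $A(z_1 - z_2) = 0$ as required.

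For the direction (b) $\Rightarrow$ (a), I would argue by contrapositive. Suppose $A$ is not column adequate, so there exists $z$ with $z_i (Az)_i \leq 0$ for all $i$ but $Az \neq 0$. Write $z = z^+ - z^-$ with $z^+, z^- \geq 0$ and $z^+_i z^-_i = 0$, and split the indices as $I^+ = \{i : z_i > 0\}$, $I^- = \{i : z_i < 0\}$, $I^0 = \{i : z_i = 0\}$. The plan is to design a single $q$ so that $(w^+, z^+)$ and $(w^-, z^-)$ with $w^{\pm} = A z^{\pm} + q$ are both solutions of $LCP(q,A)$, yet $w^+ - w^- = A z \neq 0$, contradicting $w$-uniqueness. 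Define $q_i = -(A z^+)_i$ for $i \in I^+$, $q_i = -(A z^-)_i$ for $i \in I^-$, and $q_i$ large enough on $I^0$ to ensure nonnegativity of both $A z^{\pm} + q$ there.

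The main verification, and the only subtle step, is feasibility of both candidates under this $q$. For $i \in I^+$ one checks $(A z^-)_i + q_i = -(Az)_i \geq 0$ using the sign hypothesis $(Az)_i \leq 0$ on $I^+$; symmetrically for $i \in I^-$ one gets $(A z^+)_i + q_i = (Az)_i \geq 0$. Complementarity $(z^+)^T w^+ = 0 = (z^-)^T w^-$ holds by construction, since $q_i$ was chosen to zero out $w^+_i$ on $I^+$ and $w^-_i$ on $I^-$, while $z^{\pm}_i = 0$ on the complementary sets. I expect the only real obstacle to be the bookkeeping of these sign cases, particularly handling $I^0$ so that the two feasibility inequalities do not collide; once the case analysis is organized, both parts of the proof are short, and together with Theorem~\ref{matrix w uniqueness result} motivates the tensor analogues developed in the subsequent sections.
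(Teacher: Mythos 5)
Your proof is correct. Note first that the paper itself supplies no proof of this statement: it is imported verbatim from Ingleton's work via the citation, so there is no in-paper argument to compare against. What you have written is essentially the classical argument. The forward direction is the standard two-solution computation: complementarity kills the diagonal terms in $(z_1-z_2)_i(w_1-w_2)_i$, the cross terms are nonpositive, and column adequacy applied to $z_1-z_2$ forces $A(z_1-z_2)=w_1-w_2=0$. The reverse direction via the contrapositive also checks out: your choice of $q$ makes $w^+_i=0$ on $I^+$ and $w^-_i=0$ on $I^-$, the sign hypothesis $z_i(Az)_i\leq 0$ gives exactly the nonnegativity of $w^-$ on $I^+$ (where $(Az)_i\leq 0$) and of $w^+$ on $I^-$ (where $(Az)_i\geq 0$), the index set $I^0$ causes no collision since complementarity there is automatic and $q_i$ need only dominate both $-(Az^+)_i$ and $-(Az^-)_i$, and finally $w^+-w^-=Az\neq 0$ contradicts $w$-uniqueness for that $q$. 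Since the paper relies on this theorem as a black box (notably in Theorem 3.9), having a self-contained proof is a genuine addition rather than a redundancy.
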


\section{Main results}
We begin by the definition of column adequate tensor.

\begin{definition}
A tensor $\mathcal{A} \in T_{m,n} $ is said to be column adequate tensor if $x_i (\mathcal{A}x^{m-1})_i \leq 0 , ~\forall \; i \in [n] $ implies $\mathcal{A}x^{m-1} =0$.
\end{definition}

\begin{remark}
Let $\mathcal{A}$ be a $P$-tensor. Then the condition $ x_i (\mathcal{A}x^{m-1})_i \leq 0 \;\forall \; i \in [n]$ holds only for $x=0.$ Therefore a $P$-tensor is trivially a column adequate tensor.
\end{remark}

\begin{example}
Let $\mathcal{A}\in T_{4,2},$ such that $a_{1 1 1 1}=2, ~a_{1 1 1 2}=1, ~a_{2 1 2 2}=4, ~a_{2 2 2 2}=2$ and all other entries of $\mathcal{A}$ are zeros.

Then $\mathcal{A}x^3=
\left(
	\begin{array}{l}
	 (2x_1 + x_2)x_1^2 \\
	 2(2x_1 + x_2)x_2^2
	 	\end{array}
\right) .$ Let $x_i (\mathcal{A}x^3)_i \leq 0$ for $i=1,2.$ i.e. $(2x_1 +x_2)x_1^3 \leq 0$ and $(2x_1 +x_2)2x_2^3 \leq 0$.

\noindent Now $(2x_1 +x_2)x_1^3 < 0 \implies \left\{
\begin{array}{ll}
	 \mbox{either,} & x_1<0 \mbox{ and } 2x_1 +x_2 >0, \mbox{ implies } (2x_1 +x_2)x_2^3 >0, \\
	  \mbox{or,} & x_1>0 \mbox{ and } 2x_1 +x_2 <0, \mbox{ implies } (2x_1 +x_2)x_2^3 >0.
	   \end{array}
 \right.$
Similarly $(2x_1 +x_2)x_2^3 < 0 \implies (2x_1 +x_2)x_1^3 >0.$
Therefore the systems of inequations
\begin{equation}
\begin{split}
(2x_1 +x_2)x_1^3 & < 0 \\ 
(2x_1 +x_2)2x_2^3 & \leq 0
\end{split}
\end{equation}
and \begin{equation}
\begin{split}
(2x_1 +x_2)x_1^3 & \leq 0 \\ 
(2x_1 +x_2)2x_2^3 & < 0
\end{split}
\end{equation} 
are inconsistent. Now these two equations \begin{equation}
\begin{split}
(2x_1 +x_2)x_1^3 & = 0 \\ 
(2x_1 +x_2)2x_2^3 & = 0
\end{split}
\end{equation} 
have solution if $(2x_1 +x_2)=0.$ This implies $\mathcal{A}x^3=
\left(
	\begin{array}{l}
	 0\\
	 0
	 	\end{array}
\right).$ Therefore $\mathcal{A}$ is column adequate tensor.

\noindent To show that $\mathcal{A}$ is neither a $P$-tensor nor a $PSD$ tensor, consider $x=\left( \begin{array}{c}
    1 \\
    -1.5
\end{array} \right).$
\end{example}

\begin{example}
Let $\mathcal{A} \in T_{4,2} ,$ such that $a_{1111}=1, ~a_{1112}=-1, ~a_{2111}=1$ and all other entries of $\mathcal{A}$ are zeros.
\noindent Then $\mathcal{A}x^4 = x_1^4 \geq 0.$ Therefore $\mathcal{A}$ is a non-symmetric PSD tensor. Again 
$\mathcal{A}x^3=
\left(
	\begin{array}{c}
	 x_1^2(x_1 -x_2) \\
	 x_1^3
	 	\end{array}
\right).$ 
Then we have $x_1 (\mathcal{A}x^3)_1 = x_1^3(x_1-x_2),$ and $x_2 (\mathcal{A}x^3)_2 = x_1^3 x_2.$ Now $x_i (\mathcal{A}x^3)_i \leq 0$ implies $x_1^3(x_1-x_2) \leq 0, $ and $x_1^3 x_2 \leq 0 .$ Now it is easy to check that if $x_1 (\mathcal{A}x^3)_1 < 0$ then $x_2 (\mathcal{A}x^3)_2 > 0$ and if $x_2 (\mathcal{A}x^3)_2 < 0$ then $x_1 (\mathcal{A}x^3)_1 > 0$. Solving $x_i (\mathcal{A}x^3)_i =0, \; \forall \; i\in [2]$ we obtain  $x=\left( \begin{array}{c}
    0 \\
    k
\end{array} \right) $ where $k \in \mathbb{R}.$ For $x=\left( \begin{array}{c}
    0 \\
    k
\end{array} \right)$ we obtain $\mathcal{A}x^3 =\left( \begin{array}{c}
    0 \\
    0
\end{array} \right).$ Hence $\mathcal{A}$ is column adequate tensor.
\end{example}

\begin{remark}
Adequate tensors are such tensors that every vector whose sign is reversed by the function $F(x) = \mathcal{A}x^{m-1}$ belongs to the kernel of the function $F(x) = \mathcal{A}x^{m-1}$.
\end{remark}

\begin{theorem}
A column adequate tensor is a column sufficient tensor.
\end{theorem}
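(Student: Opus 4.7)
The proof is essentially immediate from unfolding the two definitions, so the plan is short.

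My plan is to take an arbitrary $x \in \mathbb{R}^n$ satisfying the hypothesis of column sufficiency, namely $x_i(\mathcal{A}x^{m-1})_i \le 0$ for every $i \in [n]$, and show that in fact each of these products vanishes. Since this is exactly the hypothesis in the definition of column adequate tensor, and since $\mathcal{A}$ is column adequate by assumption, I can invoke that definition directly to conclude $\mathcal{A}x^{m-1} = 0$.

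Once $\mathcal{A}x^{m-1} = 0$, every component $(\mathcal{A}x^{m-1})_i$ equals zero, so trivially $x_i (\mathcal{A}x^{m-1})_i = 0$ for all $i \in [n]$, which is the conclusion required for column sufficiency. Therefore $\mathcal{A}$ is column sufficient.

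There is no real obstacle here: the implication is a one-line consequence of the fact that the conclusion of the adequate condition ($\mathcal{A}x^{m-1}=0$) is strictly stronger than the conclusion of the sufficient condition ($x_i(\mathcal{A}x^{m-1})_i = 0$ for all $i$). The only thing worth emphasizing in the write-up is that both definitions share the same hypothesis $x_i(\mathcal{A}x^{m-1})_i \le 0$ for all $i$, so no further case analysis or auxiliary construction is needed.
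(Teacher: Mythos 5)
Your proposal is correct and follows exactly the same route as the paper's proof: both apply the column adequate implication to obtain $\mathcal{A}x^{m-1}=0$ from the shared hypothesis $x_i(\mathcal{A}x^{m-1})_i \le 0$ for all $i\in[n]$, and then observe that this forces $x_i(\mathcal{A}x^{m-1})_i = 0$ for all $i$. No gaps.
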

\begin{proof}
Let $\mathcal{A} \in T_{m,n} $ be column adequate tensor. Then for $x\in \mathbb{R}^n, \; x_i(\mathcal{A}x^{m-1} )_i \leq 0$, $\forall \; i \in [n] \implies \mathcal{A}x^{m-1}=0 \implies x_i (\mathcal{A}x^{m-1} )_i =0, \; \forall \; i\in [n] $. This implies that $\mathcal{A}$ is column sufficient tensor.  
\end{proof}

\begin{remark}
Not all column sufficient tensors are column adequate tensor. Below we give an example of a column sufficient tensor which is not column adequate tensor.
\end{remark}

\begin{example}
Consider the tensor $\mathcal{A} \in T_{4,2}$, such that  
$ a_{1112}=-2, ~a_{2111}= a_{2222}=1$ and all other entries of $\mathcal{A}$ are zeros. Then $\mathcal{A}$ is shown to be a a column sufficient tensor in \cite{chen2018column}. 
Then
$\mathcal{A}x^{3}=
\left(
	\begin{array}{c}
	  -2 x_1^2 x_2 \\
	   x_1^3 +x_2^3
	 	\end{array}
\right) $
 and $x_1 (\mathcal{A}x^3)_1 = -2 x_1^3 x_2$, $ x_2 (\mathcal{A}x^3)_2 = x_1^3 x_2 + x_2^4 $. Then for $x=\left( \begin{array}{c}
    1 \\
    0
\end{array} \right)$ we have $x_i (\mathcal{A}x^3)_i = 0, \; \forall \; i \in [2]$ but $\mathcal{A}x^3 =\left( \begin{array}{c}
    0 \\
    1
\end{array} \right) \neq \left( \begin{array}{c}
    0 \\
    0
\end{array} \right).$
 Therefore $\mathcal{A}$ is not a column adequate tensor.
 \end{example}

In the following result we prove the inheritance property of column adequate tensors in the principal sub-tensor point of view.

\begin{theorem}
Suppose that  $\mathcal{A} \in T_{m,n} $ is a column adequate tensor then all principal sub-tensors of $\mathcal{A}$ are column adequate tensors.
\end{theorem}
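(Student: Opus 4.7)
The plan is to use the standard zero-extension trick: given a principal sub-tensor $\mathcal{A}^J_r$ with index set $J \subseteq [n]$, $|J|=r$, and an arbitrary vector $y \in \mathbb{R}^r$ witnessing the hypothesis of column adequacy at the sub-tensor level, lift $y$ to a vector $x \in \mathbb{R}^n$ by setting $x_i = y_i$ for $i \in J$ and $x_i = 0$ for $i \notin J$, then apply the column adequacy of $\mathcal{A}$ itself.

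Concretely, suppose $y \in \mathbb{R}^r$ satisfies $y_i (\mathcal{A}^J_r y^{m-1})_i \leq 0$ for every $i \in J$ (identifying the components of $y$ with the positions in $J$). First I would record the key identity that, because the remaining coordinates of $x$ vanish, each product $x_{i_2}\cdots x_{i_m}$ appearing in the definition of $(\mathcal{A}x^{m-1})_i$ is zero unless all indices $i_2,\dots,i_m$ lie in $J$; hence for $i \in J$ we get $(\mathcal{A}x^{m-1})_i = (\mathcal{A}^J_r y^{m-1})_i$, while for $i \notin J$ we automatically have $x_i = 0$ and so $x_i(\mathcal{A}x^{m-1})_i = 0$. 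Combined with the hypothesis on $y$, this shows $x_i(\mathcal{A}x^{m-1})_i \leq 0$ for every $i \in [n]$.

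Now I can apply the column adequacy of $\mathcal{A}$ to conclude $\mathcal{A}x^{m-1} = 0$. Restricting to coordinates in $J$ and using the identity above yields $\mathcal{A}^J_r y^{m-1} = 0$, which is exactly column adequacy of $\mathcal{A}^J_r$. Since $J$ was arbitrary, the inheritance follows.

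I don't anticipate a real obstacle here; the only thing that requires care is the bookkeeping in the index-restriction identity $(\mathcal{A}x^{m-1})_i = (\mathcal{A}^J_r y^{m-1})_i$ for $i \in J$, which is a direct unfolding of the definition of $\mathcal{A}x^{m-1}$ once one notes that all terms where some $i_k \notin J$ are killed by the zero-extension. This mirrors the classical matrix proof that principal sub-matrices of a column adequate matrix are column adequate, and the same scheme has been used earlier for inheritance results on $P_0$- and column sufficient tensors.
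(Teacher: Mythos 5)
Your proposal is correct and is essentially identical to the paper's own proof: both use the zero-extension of the sub-vector to $\mathbb{R}^n$, observe that $(\mathcal{A}x^{m-1})_i=(\mathcal{A}^J_r y^{m-1})_i$ for $i\in J$ while the products vanish off $J$, and then invoke column adequacy of $\mathcal{A}$ before restricting back to $J$. No gaps; nothing further to add.
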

\begin{proof}
Let $J \subseteq [n]$, $|J|= r$ and $\mathcal{A}_r^J$ be the principal sub-tensor of  $\mathcal{A}$. For some $x\in \mathbb{R}^r$, if $x_i(\mathcal{A}_r^J x^{m-1})_i \leq 0,~\forall \; i\in J $, we construct $y\in \mathbb{R}^n $ such that $y_i= \left\{
\begin{array}{ll}
	  x_i  &;\; \forall \; i\in J \\
	  0  &; \; \forall \; i\in J^c
	   \end{array}
 \right.$.
Then we have $(\mathcal{A}y^{m-1})_i=(\mathcal{A}_r^J x^{m-1})_i, \; \forall \; i\in J$, which follows  \[y_i(\mathcal{A}y^{m-1})_i = \left\{
\begin{array}{cc}
	  x_i(\mathcal{A}_r^J x^{m-1})_i \leq 0 & ; \; \forall \; i\in J \\
	  0   & ; \; \forall \; i\in J^c
	   \end{array}
 \right. .\]
Thus $y_i(\mathcal{A}y^{m-1})_i \leq 0, \; \forall \; i\in[n] $. Since $\mathcal{A}$ is column adequate tensor, $y_i(\mathcal{A}y^{m-1})_i \leq 0, \; \forall \; i\in[n] \implies (\mathcal{A}y^{m-1})_i=0 ~, \forall \; i \in [n] $ $\implies (\mathcal{A}_r^J x^{m-1})_i=0, \; \forall \; i \in J $.
Hence $\mathcal{A}_r^J$ is a column adequate tensor.
\end{proof}

\begin{theorem}
Let $\mathcal{A} \in T_{m,n}$ and $P=diag(p_1, p_2, ..., p_n)$, $Q=diag(q_1, q_2, ..., q_n) $ be two diagonal matrices of order $n$ where $p_i q_i >0, ~\forall \; i \in [n]$. Then $ \mathcal{A}$ is column adequate tensor if and only if $ P\mathcal{A}Q $ is column adequate tensor.
\end{theorem}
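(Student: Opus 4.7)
The plan is to reduce the claim to a direct substitution identity. Writing out the general product with diagonal matrices gives the entrywise formula
\[
(P\mathcal{A}Q)_{i\, i_2\cdots i_m} \;=\; p_i\, q_{i_2}\cdots q_{i_m}\, a_{i\, i_2\cdots i_m},
\]
since the sums from Shao's definition collapse whenever $P$ or $Q$ is diagonal. From this, a short calculation (pulling the scalars $p_i$ and $q_{i_j}$ out of the sum defining $\mathcal{A}x^{m-1}$) yields the key identity
\[
\bigl((P\mathcal{A}Q)x^{m-1}\bigr)_i \;=\; p_i \bigl(\mathcal{A}(Qx)^{m-1}\bigr)_i \qquad \forall\, i\in[n].
\]

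With this identity in hand, the argument becomes essentially linear-algebraic. The substitution I would use is $y=Qx$, which is a bijection on $\mathbb{R}^n$ because $p_i q_i>0$ forces $q_i\neq 0$; also $p_i\neq 0$. Using $x_i=y_i/q_i$, the identity rewrites as
\[
x_i\bigl((P\mathcal{A}Q)x^{m-1}\bigr)_i \;=\; \frac{p_i}{q_i}\, y_i\bigl(\mathcal{A}y^{m-1}\bigr)_i,
\]
and the hypothesis $p_i q_i>0$ is exactly what guarantees the scalar $p_i/q_i$ is strictly positive for every $i$. Consequently, the sign condition $x_i((P\mathcal{A}Q)x^{m-1})_i\le 0$ for all $i$ is \emph{equivalent} to $y_i(\mathcal{A}y^{m-1})_i\le 0$ for all $i$.

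From here both implications are immediate. For ($\Rightarrow$), assume $\mathcal{A}$ is column adequate; if $x_i((P\mathcal{A}Q)x^{m-1})_i\le 0$ for all $i$, then with $y=Qx$ we get $\mathcal{A}y^{m-1}=0$, and multiplying by $P$ gives $(P\mathcal{A}Q)x^{m-1}=P\mathcal{A}y^{m-1}=0$. For ($\Leftarrow$), assume $P\mathcal{A}Q$ is column adequate; given $y$ with $y_i(\mathcal{A}y^{m-1})_i\le 0$ for all $i$, let $x=Q^{-1}y$, obtain $(P\mathcal{A}Q)x^{m-1}=0$, and then invert the nonsingular diagonal $P$ to conclude $\mathcal{A}y^{m-1}=0$.

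The only genuine step that needs care is verifying the entrywise formula for $P\mathcal{A}Q$ under Shao's product — once that is stated correctly, everything else is bookkeeping, and the hypothesis $p_iq_i>0$ plays its role in exactly one place: ensuring the factor $p_i/q_i$ is positive so the inequality direction is preserved. I do not foresee a serious obstacle beyond writing the product expansion cleanly.
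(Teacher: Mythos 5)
Your proposal is correct and follows essentially the same route as the paper: the entrywise formula for $P\mathcal{A}Q$ under Shao's product, the substitution $y=Qx$, the identity $x_i((P\mathcal{A}Q)x^{m-1})_i=\frac{p_i}{q_i}\,y_i(\mathcal{A}y^{m-1})_i$, and the positivity of $p_i/q_i$ to preserve the sign condition. The only cosmetic difference is that the paper handles the converse by applying the forward direction to $P^{-1}$ and $Q^{-1}$, whereas you argue it directly; the two are interchangeable.
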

\begin{proof}
Let $ P= diag(p_1, p_2, ..., p_n)$ and $Q= diag(q_1, q_2, ..., q_n)$ be two diagonal matrices of order $n$ where $p_i q_i >0, ~\forall \; i \in [n]$. Then for $P=[p_{ij}]_{n \times n}$, $p_{ij}=\left \{ \begin{array}{ll}
	  p_i  &;\; \forall \; i=j \\
	  0  &; \; \forall \; i \neq j
	   \end{array}  \right.$, and 
$Q=[q_{ij}]_{n \times n}$, $q_{ij}=\left \{ \begin{array}{ll}
	  p_i  &;\; \forall \; i=j \\
	  0  &; \; \forall \; i\neq j
	   \end{array}  \right.$, with $p_i q_i >0, ~\forall \; i \in [n]$.
Let $\mathcal{A} = (a_{i_1 i_2 ... i_m})\in T_{m,n}$ be column adequate tensor with order $m$ and dimension $n$. Let $\mathcal{B}= P \mathcal{A} Q$, assume $\mathcal{B} = (b_{i_1 i_2 ... i_m})$, then by the definition of multiplication for all $i_1, i_2, ...,i_m \in [n]$, we have,
\[ b_{i_1 i_2 ...i_m} = \sum_{j_1, j_2, ...,j_m \in [n] } p_{i_1 j_1} a_{j_1 j_2 ...j_m} q_{j_2 i_2} q_{j_3 i_3} \cdot \cdot \cdot q_{j_m i_m}\]
\[ = p_{i_1} a_{i_1 i_2 ...i_m} q_{ i_2} q_{ i_3} \cdot \cdot \cdot q_{ i_m}  \]
Then for $x\in \mathbb{R}^n$ and $i\in [n], $ we obtain, 
\begin{align*}
   x_i(\mathcal{B}x^{m-1})_i  & = x_i \sum_{ i_2, ...,i_m \in [n] } b_{i i_2 ...i_m} x_{ i_2} x_{ i_3} \cdot \cdot \cdot x_{ i_m} \\
      & = x_i \sum_{ i_2, ...,i_m \in [n] } p_i a_{i i_2 ...i_m} q_{i_2} q_{i_3} \cdot \cdot \cdot q_{i_m}   x_{ i_2} x_{ i_3} \cdot \cdot \cdot x_{ i_m} \\
       & = \frac{p_i}{q_i} (q_i x_i) \sum_{ i_2, ...,i_m \in [n] } a_{i i_2 ...i_m} (q_{i_2} x_{ i_2}) (q_{i_3}  x_{ i_3}) \cdot \cdot \cdot  (q_{i_m}x_{ i_m})\\
       & = \frac{p_i}{q_i} y_i \sum_{ i_2, ...,i_m \in [n] } a_{i i_2 ...i_m} y_{ i_2} y_{ i_3} \cdot \cdot \cdot y_{ i_m} \\
       & = \frac{p_i}{q_i} y_i(\mathcal{A}y^{m-1})_i 
\end{align*}

\noindent where $y=Q x$. Since $ p_i q_i>0, \; \forall \; i\in [n]$, $x_i(\mathcal{B}x^{m-1})_i \leq 0 \iff y_i(\mathcal{A}y^{m-1})_i \leq 0, \forall \; i \in [n]$.
Since $\mathcal{A} $ is column adequate tensor so $\forall \; i \in [n],$  $y_i(\mathcal{A}y^{m-1})_i \leq 0 \implies \mathcal{A}y^{m-1}= 0 $. Again $(\mathcal{B}x^{m-1})_i = p_i(\mathcal{A}y^{m-1})_i,~ \forall \; i\in [n] $. Therefore $\mathcal{A}y^{m-1}= 0 \implies \mathcal{B}x^{m-1}=0 $. Thus $x_i(\mathcal{B}x^{m-1})_i \leq 0, ~\forall \; i \in [n] \implies \mathcal{B}x^{m-1}=0 $. Hence, $\mathcal{B} = P \mathcal{A} Q$ is column adequate tensor.

Conversely, let $P \mathcal{A} Q$ be column adequate tensor. Since the entries of $P$ and $Q$ are such that $p_i q_i>0, ~\forall \; i\in [n]$, then $P$ and $Q$ are invertible with $P^{-1}= diag (\frac{1}{p_1}, \cdots , \frac{1}{p_n})$ and $Q^{-1}= diag (\frac{1}{q_1}, \cdots , \frac{1}{q_n})$, where $ \frac{1}{p_i} \cdot \frac{1}{p_i} = \frac{1}{p_i q_i}>0, ~\forall \; i\in [n]$. Therefore by the first part of the proof, the tensor $P^{-1} (P \mathcal{A} Q) Q^{-1} = \mathcal{A} $ 
is column adequate tensor.
\end{proof}

Now we prove invariant property of column adequate tensors under rearrangement of subscripts. 

\begin{theorem}
Let $\mathcal{A} \in T_{m,n}$ and $P =[p_{ij}]_{n \times n}$ be a permutation matrix of order $n$. Then $\mathcal{A}$ is column adequate tensor if and only if $P^T\mathcal{A} P$ is column adequate tensor.
\end{theorem}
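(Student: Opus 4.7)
The plan is to imitate the proof of the preceding theorem: write $\mathcal{B} = P^T \mathcal{A} P$, compute its entries via Shao's general product, and show that every defining quantity $x_i(\mathcal{B}x^{m-1})_i$ coincides with some $y_j(\mathcal{A}y^{m-1})_j$ under the linear change of variables $y = Px$. Since $P$ is a bijection of $\mathbb{R}^n$ onto itself, this should transport the column-adequate condition verbatim between $\mathcal{A}$ and $\mathcal{B}$, yielding both directions at once.

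Concretely, fix the permutation $\sigma \in S_n$ associated with $P$, meaning $p_{ij} = \delta_{\sigma(i), j}$, so that $(Px)_i = x_{\sigma(i)}$, $P^{-1} = P^T$, and $(P^T)_{ij} = p_{ji} = \delta_{\sigma(j), i}$. Performing the same two successive contractions as in the previous theorem gives
\[
b_{i_1 i_2 \cdots i_m} = \sum_{j_1, \ldots, j_m \in [n]} (P^T)_{i_1 j_1}\, a_{j_1 j_2 \cdots j_m}\, p_{j_2 i_2} \cdots p_{j_m i_m} = a_{\sigma^{-1}(i_1),\, \sigma^{-1}(i_2),\, \ldots,\, \sigma^{-1}(i_m)},
\]
since each Kronecker $\delta$ forces $j_k = \sigma^{-1}(i_k)$. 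Setting $y = Px$ and substituting $k_t = \sigma^{-1}(i_t)$ in the inner sum, this entry identity at once yields
\[
(\mathcal{B} x^{m-1})_i = (\mathcal{A} y^{m-1})_{\sigma^{-1}(i)}, \qquad x_i = (P^T y)_i = y_{\sigma^{-1}(i)},
\]
so that $x_i(\mathcal{B}x^{m-1})_i = y_{\sigma^{-1}(i)}(\mathcal{A}y^{m-1})_{\sigma^{-1}(i)}$ for every $i \in [n]$.

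Since $\sigma^{-1}$ is a bijection on $[n]$ and $x \mapsto y = Px$ is a bijection on $\mathbb{R}^n$, the system $x_i(\mathcal{B} x^{m-1})_i \leq 0$ for all $i \in [n]$ is equivalent to $y_j(\mathcal{A} y^{m-1})_j \leq 0$ for all $j \in [n]$, and the conclusions $\mathcal{B} x^{m-1} = 0$ and $\mathcal{A} y^{m-1} = 0$ are likewise equivalent under $y = Px$. Hence $\mathcal{A}$ is column adequate if and only if $P^T \mathcal{A} P$ is column adequate; the two directions are obtained simultaneously, so no separate converse argument is required.

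The only real obstacle I anticipate is notational bookkeeping: keeping the direction of $\sigma$ versus $\sigma^{-1}$ straight through the two applications of Shao's general product, and matching the convention for $p_{ij}$ used in the previous theorem so that the key entry identity $b_{i_1 \cdots i_m} = a_{\sigma^{-1}(i_1), \ldots, \sigma^{-1}(i_m)}$ comes out correctly. Once that identity is in hand, the rest is merely a bijective relabeling of coordinates.
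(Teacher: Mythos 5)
Your proposal is correct and follows essentially the same route as the paper: both compute the entries of $\mathcal{B}=P^T\mathcal{A}P$ via Shao's product, reduce the sum using the permutation structure of $P$, and establish the key identity $x_i(\mathcal{B}x^{m-1})_i = y_{\sigma^{-1}(i)}(\mathcal{A}y^{m-1})_{\sigma^{-1}(i)}$ with $y=Px$ (the paper writes $i'$ for your $\sigma^{-1}(i)$). The only cosmetic difference is that you obtain the converse directly from the bijectivity of the relabeling, whereas the paper derives it by applying the forward direction to $P^{-1}=P^T$; both are valid.
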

\begin{proof}
Let $P$ be a permutation matrix and $\mathcal{A} \in T_{m,n}$ be a  column adequate tensor. Now consider the tensor $\mathcal{B}= P^T \mathcal{A} P.$ Then 
\[ b_{i_1 i_2 ... i_m} = \sum_{j_1, j_2, ..., j_m \in [n]} p_{j_1 i_1} a_{j_1 j_2 ...j_m} p_{j_2 i_2} p_{j_3 i_3} \cdots p_{j_m i_m}. \]
Since $P$ is a permutation matrix, only one entry in each row and column is one. Without loss of generality, for any $i\in [n],$ suppose $p_{i^{\prime}i}=1$ and $p_{ji}=0, \; j\neq i^{\prime}, \; j \in [n].$ Then we have 
\begin{equation*}
     b_{i_1 i_2 ... i_m} = \sum_{j_1, j_2, ..., j_m \in [n]} p_{j_1 i_1} a_{j_1 j_2 ...j_m} p_{j_2 i_2} p_{j_3 i_3} \cdots p_{j_m i_m}
\end{equation*}
\begin{equation}\label{permutation matrix multiplication}
     = p_{i_1^{\prime} i_1} a_{i_1^{\prime} i_2^{\prime} \dots i_m^{\prime}} p_{i_2^{\prime} i_2} p_{i_3^{\prime} i_3} \cdots p_{i_m^{\prime} i_m}.
\end{equation}
Consider $x\in \mathbb{R}^n$ such that $x_i(\mathcal{B}x^{m-1})_i \leq 0, \; \forall \; i\in [n].$ Then by $(\ref{permutation matrix multiplication}),$
\begin{align*}
     x_i(\mathcal{B}x^{m-1})_i  & = x_i \sum_{ i_2^{\prime}, ...,i_m^{\prime} \in [n]} p_{i^{\prime} i} a_{i^{\prime} i_2^{\prime} \dots i_m^{\prime}} p_{i_2^{\prime} i_2} p_{i_3^{\prime} i_3} \cdots p_{i_m^{\prime} i_m} x_{ i_2} x_{ i_3} \cdot \cdot \cdot x_{ i_m} \\
      & = p_{i^{\prime} i} x_i \sum_{ i_2^{\prime}, ...,i_m^{\prime} \in [n]}  a_{i^{\prime} i_2^{\prime} \dots i_m^{\prime}} p_{i_2^{\prime} i_2} p_{i_3^{\prime} i_3} \cdots p_{i_m^{\prime} i_m} x_{ i_2} x_{ i_3} \cdot \cdot \cdot x_{ i_m} \\
       & = y_{i^{\prime}} \sum_{ i_2^{\prime}, ...,i_m^{\prime} \in [n]}  a_{i^{\prime} i_2^{\prime} \dots i_m^{\prime}} y_{i_{2^{\prime}}} y_{i_{3^{\prime}}} \cdots y_{i_{m^{\prime}}} \\
       & = y_{i^{\prime}}(\mathcal{A}y^{m-1})_{i^{\prime}} \\
       & \leq 0.
\end{align*}
Then for $y=P x \in \mathbb{R}^n$ we have $x_i(\mathcal{B}x^{m-1})_i = y_{i^{\prime}}(\mathcal{A}y^{m-1})_{i^{\prime}} \leq 0, \; \forall \; i \in [n].$ This implies $\mathcal{A}y^{m-1}= 0$, since $\mathcal{A}$ is column adequate tensor. Then $(\mathcal{B}x^{m-1})_i = p_{i^{\prime}i}(\mathcal{A}y^{m-1})_{i^{\prime}} = 0,$ $\forall \; i\in [n],$ since $\mathcal{A}y^{m-1}= 0.$
Thus $x_i(\mathcal{B}x^{m-1})_i \leq 0, \; \forall \; i \in [n] \implies \mathcal{B}x^{m-1}=0 $. Hence, $\mathcal{B} = P^T \mathcal{A} P$ is column adequate tensor.

Conversely, Let $\mathcal{B}= P^T \mathcal{A} P$ be column adequate tensor. Since $P$ is a permutation matrix, then so is $P^{-1}$ and we have, $P^{-1}=P^T$ and $(P^{-1})^T=P$. Then by the first part of the proof, $(P^{-1})^T \mathcal{B} P^{-1}$ is column adequate tensor. Now $(P^{-1})^T \mathcal{B} P^{-1}= (P^T)^{-1} (P^T \mathcal{A} P) P^{-1}= \mathcal{A} $. 
Hence $\mathcal{A}$ is column adequate tensor.
\end{proof}

\begin{theorem}
Let $\mathcal{A} \in T_{m,n}$ be a column adequate tensor. Then $\mathcal{A}$ is a $P_0$-tensor.
\end{theorem}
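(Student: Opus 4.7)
The plan is a direct proof by contradiction. I would assume that $\mathcal{A}$ is column adequate but fails to be a $P_0$-tensor, then use the definition of column adequacy to derive a contradiction.

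First, I would unpack the negation of the $P_0$-tensor condition. By definition, $\mathcal{A}$ is a $P_0$-tensor if for every $x \in \mathbb{R}^n \setminus \{0\}$ there exists $i \in [n]$ with $x_i \neq 0$ and $x_i(\mathcal{A}x^{m-1})_i \geq 0$. Negating this, I would fix some $x \neq 0$ with the property that for every index $i$ satisfying $x_i \neq 0$ one has $x_i(\mathcal{A}x^{m-1})_i < 0$.

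Next, I would verify that this $x$ meets the hypothesis of column adequacy at every coordinate. Split $[n]$ into two parts according to whether $x_i = 0$ or $x_i \neq 0$. On the first set $x_i(\mathcal{A}x^{m-1})_i = 0 \leq 0$ trivially, and on the second set $x_i(\mathcal{A}x^{m-1})_i < 0 \leq 0$ by the choice of $x$. Hence $x_i(\mathcal{A}x^{m-1})_i \leq 0$ for all $i \in [n]$.

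Applying the column adequate property to this $x$ forces $\mathcal{A}x^{m-1} = 0$, so $x_i(\mathcal{A}x^{m-1})_i = 0$ for every $i$. But since $x \neq 0$, there is at least one index $i_0$ with $x_{i_0} \neq 0$, and for that index our assumption gave $x_{i_0}(\mathcal{A}x^{m-1})_{i_0} < 0$, contradicting the conclusion $\mathcal{A}x^{m-1} = 0$. This contradiction shows $\mathcal{A}$ is a $P_0$-tensor. The argument is essentially bookkeeping on the sign conditions; there is no genuine obstacle, the only subtlety being careful handling of indices with $x_i = 0$ when matching the column adequate hypothesis to the negated $P_0$ condition.
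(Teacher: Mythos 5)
Your proof is correct and follows essentially the same route as the paper: argue by contradiction, observe that the hypothetical counterexample satisfies the column adequate hypothesis at every coordinate, and apply column adequacy to get $\mathcal{A}x^{m-1}=0$, contradicting strict negativity at some index. In fact your version is slightly more careful than the paper's, which negates the $P_0$ condition as ``$y_i(\mathcal{A}y^{m-1})_i<0$ for all $i\in[n]$'' and thereby overlooks the qualifier $x_i\neq 0$ in the definition (indices with $y_i=0$ force the product to be zero, so the paper's assumed inequality cannot hold verbatim); your explicit split into the cases $x_i=0$ and $x_i\neq 0$ closes that small gap.
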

\begin{proof}
Let $\mathcal{A} \in T_{m,n}$ be column adequate tensor. Then for $x\in \mathbb{R}^n, ~ x_i(\mathcal{A}x^{m-1})_i \leq 0, \;\forall \; i \in [n] $, implies $\mathcal{A}x^{m-1} = 0$. Now we show that for all $x\in \mathbb{R}^n \backslash \{0\}$, there exists an index $i\in [n]$ such that $x_i \neq 0$ and $x_i (\mathcal{A}x^{m-1})_i  \geq 0$. If possible, let $\exists \; y \in \mathbb{R}^n $ such that 
\begin{equation}\label{adequate to P0}
  y_i(\mathcal{A}y^{m-1})_i <0, \; \forall \; i \in [n].  
\end{equation}
Since $\mathcal{A}$ is column adequate tensor so by the inequalities (\ref{adequate to P0}) we conclude $\mathcal{A}y^{m-1} = 0$ which in turn $y_i(\mathcal{A}y^{m-1})_i=0 $. This contradicts (\ref{adequate to P0}). Hence $\mathcal{A} \in P_0$.
\end{proof}

\begin{remark}
Not all $P_0$-tensors are column adequate tensor.
\end{remark}
 Below we give an example of a $P_0$-tensor which is not column adequate tensor.
 
\begin{example}
Let $\mathcal{A} \in T_{4,2} $ such that $a_{1111}=1, ~a_{1222}=-1, ~a_{2221}=1$ and all other entries of $\mathcal{A}$ are zeros. Then $\mathcal{A}x^4 = x_1^4 \geq 0$, which implies $\mathcal{A}$ is a non-symmetric PSD tensor as well as $P_0$-tensor.

\noindent But $\mathcal{A}x^3=
\left(
	\begin{array}{c}
	 x_1^3 -x_2^3 \\
	 x_1 x_2^2
	 	\end{array}
\right)$
and $x_1 (\mathcal{A}x^3)_1 = x_1(x_1^3-x_2^3),$ $x_2 (\mathcal{A}x^3)_2 = x_1 x_2^3.$ For $x=\left( \begin{array}{c}
    0 \\
    k
\end{array} \right), ~k\neq 0 $ we have $x_1 (\mathcal{A}x^3)_1 = 0, ~x_2 (\mathcal{A}x^3)_2 =0 $ but $\mathcal{A}x^3 = \left( \begin{array}{c}
    -k^3 \\
    0
\end{array} \right) \neq \left( \begin{array}{c}
    0 \\
    0
\end{array} \right).$ Therefore $\mathcal{A}$ is not a column adequate tensor.
\end{example}

\begin{definition}
A tensor $\mathcal{A} \in T_{m,n} $ is said to be weak column adequate if $x_i^{m-1} (\mathcal{A}x^{m-1})_i \leq 0 , \; \forall \; i \in [n] $ implies $\mathcal{A}x^{m-1} =0.$
\end{definition}

\begin{example}
Let $\mathcal{A}\in T_{m,n}$, where $a_{111}=1$ and all other entries of $\mathcal{A}$ are zeros. Then $\mathcal{A}x^2 = \left( \begin{array}{c}
    x_1^2 \\
    0
\end{array} \right) .$ Now $x_i^2 (\mathcal{A}x^2)_i \leq 0, \; \forall \; i\in[2] $ implies $x=\left( \begin{array}{c}
    0 \\
    0
\end{array} \right) .$ For which $\mathcal{A}x^2= \left( \begin{array}{c}
    0 \\
    0
\end{array} \right).$ Therefore $\mathcal{A}$ is a weak column adequate tensor. To show that $\mathcal{A}$ is not column adequate consider $x=\left( \begin{array}{c}
    -1\\
    0
\end{array} \right) .$
\end{example}

\begin{remark}
For even order tensor the set of weak column adequate tensor is equal to the set of column adequate tensor.
\end{remark}
\begin{theorem}
Let $\mathcal{A} \in T_{m,n}$ be weak column adequate tensor. Then $\mathcal{A}$ is weak $P_0$-tensor.
\end{theorem}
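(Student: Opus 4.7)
The plan is to mimic the contradiction argument that was just used to show a column adequate tensor is a $P_0$-tensor, but replacing $x_i(\mathcal{A}x^{m-1})_i$ with the ``weak'' expression $x_i^{m-1}(\mathcal{A}x^{m-1})_i$ throughout. So I would assume for contradiction that $\mathcal{A}$ is not a weak $P_0$-tensor, produce a witness $x$, and then verify that this same $x$ violates the weak column adequacy hypothesis.

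More concretely, suppose $\mathcal{A}$ is not weak $P_0$. By the negation of the definition, there exists $y\in\mathbb{R}^n\setminus\{0\}$ such that for every index $i\in[n]$ either $y_i=0$, or $y_i\neq 0$ and $y_i^{m-1}(\mathcal{A}y^{m-1})_i<0$. The crucial first observation is that in both cases $y_i^{m-1}(\mathcal{A}y^{m-1})_i\leq 0$ holds (the case $y_i=0$ makes the product trivially zero). Hence the full system
\[
y_i^{m-1}(\mathcal{A}y^{m-1})_i \leq 0, \qquad \forall\, i\in[n],
\]
is satisfied.

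Now I would apply the weak column adequacy of $\mathcal{A}$ directly to $y$: the displayed inequalities force $\mathcal{A}y^{m-1}=0$, and therefore $y_i^{m-1}(\mathcal{A}y^{m-1})_i=0$ for every $i\in[n]$. Since $y\neq 0$, there is at least one index $i_0$ with $y_{i_0}\neq 0$, and by choice of $y$ this index satisfies the strict inequality $y_{i_0}^{m-1}(\mathcal{A}y^{m-1})_{i_0}<0$, contradicting the equality just derived. This contradiction shows that $\mathcal{A}$ must be a weak $P_0$-tensor.

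I do not anticipate a genuine obstacle here; the statement is a direct weak analogue of the preceding theorem and the proof is essentially formal. The only mild subtlety is handling indices $i$ with $x_i=0$ in the negation of the weak $P_0$ condition, which is why I flagged the ``both cases'' observation above; once that is noted, the weak column adequacy hypothesis can be invoked on the nose and the contradiction is immediate.
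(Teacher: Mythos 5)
Your proof is correct and follows essentially the same contradiction argument as the paper: assume a witness $y$ violating the weak $P_0$ condition, observe that $y_i^{m-1}(\mathcal{A}y^{m-1})_i \leq 0$ for all $i$, invoke weak column adequacy to get $\mathcal{A}y^{m-1}=0$, and derive a contradiction. In fact your treatment is slightly more careful than the paper's, which negates the weak $P_0$ condition to ``all products strictly negative'' and skips the indices with $y_i=0$ that you correctly fold into the $\leq 0$ system before extracting the contradiction at an index where $y_{i_0}\neq 0$.
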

\begin{proof}
Let $\mathcal{A} \in T_{m,n}$ be weak column adequate tensor. Then for $x\in \mathbb{R}^n, ~ x_i^{m-1}(\mathcal{A}x^{m-1})_i \leq 0, \;\forall \; i \in [n] $, implies $\mathcal{A}x^{m-1} = 0$. Now we show that for all $x\in \mathbb{R}^n \backslash \{0\}$, there exists an index $i\in [n]$ such that $x_i \neq 0$ and $x_i (\mathcal{A}x^{m-1})_i  \geq 0$. If possible, let $\exists \; y \in \mathbb{R}^n $ such that 
\begin{equation}\label{weak adequate to weak P0}
  y_i^{m-1}(\mathcal{A}y^{m-1})_i <0, \; \forall \; i \in [n].
\end{equation}
Since $\mathcal{A}$ is column adequate tensor so by inequalities (\ref{weak adequate to weak P0}) we conclude $\mathcal{A}y^{m-1} = 0$ which in turn $y_i^{m-1}(\mathcal{A}y^{m-1})_i=0 $. This contradicts (\ref{weak adequate to weak P0}). Hence $\mathcal{A}$ is a weak $P_0$-tensor.
\end{proof}

\subsection{Auxiliary matrix formation}
Consider a tensor of order $m$ and dimension $n$, that is $\mathcal{A} \in T_{m,n}$ . Then for any vector $x =(x_1, x_2, ... ,x_n)^T\in \mathbb{R}^n $ the equation $\mathcal{A}x^{m-1} = b$ gives a system of homogeneous polynomial equations in $n$ variables of degree $(m-1)$. Here the number of equations are $n$ and in each polynomial equation, the number of monomials are $N$ , where $N$ is equal to number of monomials in $x_1,\;  x_2,\; ... ,\;x_n ~$ of degree $(m-1),$ i.e. $N= \binom{m+n-2}{n-1}$. Let $\mathcal{A} = (a_{i_1 i_2 ... i_n}) \in T_{m,n}$. Then $(\mathcal{A}x^{m-1})_i$ is a homogeneous polynomial of degree $(m-1)$ in $n$ variables $x_1, x_2, ... ,x_n$, which can be represented as,
\begin{equation*}
\begin{split}
 (\mathcal{A}x^{m-1})_i & =  \sum_{i_2, i_3, ..., i_n =1}^n a_{i i_2 ... i_n} x_{i_2}x_{i_3}\cdot \cdot \cdot x_{i_n} \\
  & =  \sum_{\alpha_1 + \alpha_2 + \cdot \cdot \cdot +\alpha_n =m-1} C^i_{(\alpha_1, \alpha_2, \cdot \cdot \cdot ,\alpha_n)}   x_1^{\alpha_1} x_2^{\alpha_2} \cdot \cdot \cdot x_n^{\alpha_n}\\
  & = \sum_{|\alpha|= m-1} C_{\alpha}^i x^{\alpha} 
 \end{split}
\end{equation*}

\noindent where $\alpha_i \in [m-1] \cup \{0\}, ~i=1,2,...n$ and $\alpha =(\alpha_1, \alpha_2, \cdot \cdot \cdot ,\alpha_n)$ such that $|\alpha| = \alpha_1 + \alpha_2 + \cdot \cdot \cdot +\alpha_n =m-1 $.

Here $x^{\alpha} = x_1^{\alpha_1} x_2^{\alpha_2} \cdot \cdot \cdot x_n^{\alpha_n} $ and 
\begin{equation*}
\begin{split}
 C_{(\alpha_1, \alpha_2, \cdot \cdot \cdot ,\alpha_n)}^i & = C_{\alpha}^i \\
 & = \mbox{Coefficient of the monomial } x^{\alpha} \mbox{ in } (\mathcal{A}x^{m-1})_i \\
 & = \mbox{Coefficient of the monomial } x_1^{\alpha_1} x_2^{\alpha_2} \cdot \cdot \cdot x_n^{\alpha_n} \mbox{ in } (\mathcal{A}x^{m-1})_i
\end{split}
\end{equation*}

\noindent Clearly $C_{\alpha}^i =$ Sum of all the elements of the form $a_{i \pi \{ (\alpha_1 ~times~ 1)(\alpha_2 ~times~ 2)\cdot \cdot \cdot (\alpha_n ~times~ n) \}}$ from $\mathcal{A}$, where $\pi \{ (\alpha_1 ~times~ 1)(\alpha_2 ~times~ 2)\cdot \cdot \cdot (\alpha_n ~times~ n) \}$ represents all possible permutation of $1,1,... \alpha_1 \mbox{ times },2,2,... \alpha_2 \mbox{ times },... n,n,...\alpha_n \mbox{ times }$ and if for any $k\in [n], \alpha_k =0 $, then $0$ times $k$ has no impact on $C_\alpha^i$.

\noindent Now we define modified graded lexicographic order. 
\begin{definition}
 Let $\alpha, \beta \in \mathbb{Z}^n_+$. We say $\alpha >_{mglo} \beta$ if any one of the followings hold:
\begin{enumerate}
    \item For $\alpha$ such that $\alpha_i = \left\{
          \begin{array}{ll}
           	  |\alpha|  &;\; \forall \; i\in [n] \\
        	  0  &; \; otherwise.
     	   \end{array}
       \right.$ and for $\beta$ such that $\beta_i = \left\{
          \begin{array}{ll}
           	  |\beta|  &;\; \forall \; i\in [n] \\
        	  0  &; \; otherwise.
     	   \end{array}
       \right.$ and $\alpha >_{lex} \beta$.
    \item For $\alpha$ such that $\alpha_i = \left\{
          \begin{array}{ll}
           	  |\alpha|  &;\; \forall \; i\in [n] \\
        	  0  &; \; otherwise.
     	   \end{array}
       \right.$ and for $\beta$ such that $\beta $ contains atleast two non-zero components. 
    \item For $\alpha, \beta$ such that $\alpha, \beta$ contains at least two non-zero components and $\alpha >_{grlex} \beta$.
\end{enumerate}

\end{definition}
\begin{example}
\begin{enumerate}
    \item Let $\alpha = (2, 0, 0)$ and $\beta = (0, 2, 0).$ Then $(2, 0, 0) >_{mglo} (0, 2, 0),$ since $(2, 0, 0) >_{grlex} (0, 2, 0)$.
    \item Let $\alpha = (2, 0, 0)$ and $\beta = (2, 1, 0).$ Then $(2, 0, 0) >_{mglo} (2, 1, 0),$ since $(2, 1, 0)$ has two nonzero components and $(2, 0, 0)$ has only one non-zero component, though $(2, 1, 0) >_{grlex} (2, 0, 0) .$
    \item Let $\alpha = (2, 2, 0)$ and $\beta = (2, 1, 1).$ Then $(2, 2, 0) >_{mglo} (2, 1, 1),$ since $(2, 2, 0) >_{grlex} (2, 1, 1).$
    \item Consider the monomials $x_1^2,\; x_2^2,\; x_3^2,\; x_1x_2,\; x_2x_3,\; x_3x_1 $. Then in modified graded lex order they are arranged as 
    \[ x_1^2 >_{mglo} \; x_2^2 >_{mglo} \; x_3^2 >_{mglo} \; x_1x_2 >_{mglo} \; x_1x_3  >_{mglo} \; x_2x_3 .\]
\end{enumerate}
\end{example}

\subsubsection{Auxiliary matrix}
\noindent The polynomials involved in $\mathcal{A}x^{m-1}$ are elements of the vector space of homogeneous polyniomials of degree $(m-1)$ in $n$ variables $x_1, \; x_2, \; ...,\; x_n $. A basis of this vector space is $ \{x_1^{m-1},\; \cdots,\; x_n^{m-1},\; x_1^{m-2}x_2,\; \cdots,\; x_1^{m-2}x_n,\; \cdots,\; x_1x_n^{m-2} \}$ which contains $N$ number of monomials as its basis vectors. Consider the ordered basis $B$ of the space of homogeneous polynomials with modified graded lexicographic order as $B=\{ x_1^{m-1},\; x_2^{m-1},\; \cdots, x_n^{m-1},\; x_1^{m-2}x_2,$ $\cdots ,\;x_1x_n^{m-2} \}$.

\noindent Now if we say $y_1 = x_1^{m-1},\; y_2= x_2^{m-1},\; \cdots,\; y_n = x_n^{m-1}, $ and assign $y_{n+1},\; y_{n+2},\; \cdots,\;y_N $ with the respective ordered vectors from $B$, then $B$ can be considered as $B^{\prime} = \{ y_1,\; y_2,\; \cdots,$ $y_n, y_{n+1},\; \cdots, y_N \}$. Let for some $j\in [N]$, $y_j = x^\alpha$, for the $j$-th monomial $x^\alpha$ in $B$. Then the polynomial $(\mathcal{A}x^{m-1})_i$ can be expressed as $\sum_{j=1}^N a_{i j} y_j$, where $a_{i j} = C_{\alpha}^i =$ coefficient of $j$-th monomial in $(\mathcal{A}x^{m-1})_i$. Then $\mathcal{A}x^{m-1} $ can be expresseed as $Ay$ where $y=( y_1,\; y_2,\; \cdots,\; y_n,\; y_{n+1},\; \cdots, y_N )$ and $A$ is the coefficient matrix $A = (a_{i j})_{n \times N} $. Therefore, $\mathcal{A}x^{m-1} = A y $.

\noindent Consider the tensor complementarity problem $TCP(q,\mathcal{A})$ which is to find, $x \in \mathbb{R}^n$ such that
\[x \geq 0, ~~~~ \omega= \mathcal{A}x^{m-1} +q \geq 0,~~~~ x^T \omega = 0.\]
Since $\mathcal{A}x^{m-1} = A y $, so with help of the matrix $A$, we define an auxiliary $LCP$. The matrix $A$ is a rectangular matrix of order $n \times N$, where $n <N $. We define a new square matrix say auxiliary matrix $\Bar{A}$ by conjoining $N-n$ number of zero rows below the matrix $A$, that is $\Bar{A}=\left(\begin{array}{c}
     A  \\
     O
\end{array} \right)$, where $O$ is a null matrix of order $(N-n) \times N $.

\noindent Let $\Bar{q} \in \mathbb{R}^N$ be defined as $\Bar{q}_i = \left\{
\begin{array}{ll}
	  q_i \; ; & \forall \; i\in [n] \\
	  0 \; ;  & \forall \; i\in [N]\backslash [n]
	   \end{array}
 \right.$,
i.e. $\Bar{q}=\left(\begin{array}{c}
     q  \\
     0
\end{array} \right),$ where $0$ is a null vector of order $(N-n)$.

\noindent Then the $LCP(\Bar{q},\Bar{A})$ is to find $ y\in \mathbb{R}^N $ such that,
\begin{equation}\label{auxiliary LCP}
    y \geq 0, \;\; w = \Bar{A}y + \Bar{q} \geq 0, \;\; y^T w =0,
\end{equation}
\[  \]
is said to be the auxiliary $LCP$ to the $TCP(q,\mathcal{A})$.

\noindent Now we establish some relation between solution of $TCP(q,\mathcal{A})$ and solution of $LCP(\Bar{q},\Bar{A})$.

\begin{theorem}\label{solution of LCP implies solution of TCP}
Consider the $TCP(q,\mathcal{A})$ where $\mathcal{A}\in T_{m,n}$ and for some $q\in \mathbb{R}^n.$ Let $LCP(\Bar{q},\Bar{A})$ be the corresponding auxiliary $LCP$ defined by equations (\ref{auxiliary LCP}) and $x=(x_1,\; x_2,\;$ $... ,\;x_n)^T\in SOL(q,\mathcal{A}) $. Construct the vector $y=(y_1, y_2, ..., y_n, y_{n+1},y_{n+2},..., y_N )^T$ such that for all $j\in [N]$, $y_j$ is associated with the monomial $x_1^{\alpha_1} x_2^{\alpha_2} \cdot \cdot \cdot x_n^{\alpha_n}$, $\alpha_1 + \cdots + \alpha_n = m-1 $, by modified graded lexicographic order.  Then $y$ is a solution of $LCP(\Bar{q},\Bar{A})$. That is $x \in  SOL(q,\mathcal{A}) \implies y\in SOL(\Bar{q},\Bar{A}) $
\end{theorem}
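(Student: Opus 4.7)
The plan is to verify directly that the constructed $y \in \mathbb{R}^N$ satisfies the three defining conditions of $SOL(\bar q, \bar A)$, namely $y \ge 0$, $w := \bar A y + \bar q \ge 0$, and $y^T w = 0$. The whole argument hinges on a single structural observation: the modified graded lexicographic order was designed so that the first $n$ basis monomials are exactly $x_1^{m-1}, x_2^{m-1}, \ldots, x_n^{m-1}$ (in that order). Consequently, under the construction of $y$, one has $y_i = x_i^{m-1}$ for $i \in [n]$, and the remaining components $y_{n+1}, \ldots, y_N$ are monomials $x^\alpha$ with $|\alpha|=m-1$ and at least two nonzero exponents. Likewise, by construction $\bar A$ agrees with $A$ on its first $n$ rows and is zero below, and $\bar q$ agrees with $q$ on its first $n$ entries and is zero below.

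For nonnegativity of $y$, I would simply observe that $x \ge 0$ makes every monomial $x^\alpha$ with $\alpha \in \mathbb{Z}^n_+$ nonnegative, so $y_j \ge 0$ for each $j \in [N]$. For nonnegativity of $w$, I would split the index range. If $i \in [n]$, then since the $i$-th row of $\bar A$ is the $i$-th row of $A$ and $\bar q_i = q_i$, the identity $\mathcal{A}x^{m-1} = Ay$ gives $w_i = (\mathcal{A}x^{m-1})_i + q_i \ge 0$ by the assumption $x \in SOL(q,\mathcal{A})$. If $i \in [N] \setminus [n]$, then the $i$-th row of $\bar A$ is zero and $\bar q_i = 0$, hence $w_i = 0 \ge 0$.

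For the complementarity $y^T w = 0$, split the sum analogously:
\[
y^T w \;=\; \sum_{i=1}^{n} y_i w_i \;+\; \sum_{i=n+1}^{N} y_i w_i.
\]
The second sum vanishes because $w_i = 0$ for $i > n$. For the first sum, use $y_i = x_i^{m-1}$ and $w_i = (\mathcal{A}x^{m-1})_i + q_i$. From $x \in SOL(q,\mathcal{A})$, we have $x \ge 0$, $\mathcal{A}x^{m-1}+q \ge 0$, and $x^T(\mathcal{A}x^{m-1}+q)=0$; since this is a sum of nonnegative terms that vanishes, each term $x_i[(\mathcal{A}x^{m-1})_i+q_i]$ is individually zero. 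Hence for every $i \in [n]$, either $x_i = 0$ (which forces $x_i^{m-1}=0$) or $(\mathcal{A}x^{m-1})_i + q_i = 0$; in either case $x_i^{m-1}[(\mathcal{A}x^{m-1})_i + q_i] = 0$. Summing gives $y^T w = 0$.

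There is no real obstacle; the proof is essentially a bookkeeping verification. The only delicate point is ensuring that the chosen order on monomials puts $x_1^{m-1},\ldots,x_n^{m-1}$ in the first $n$ positions so that the top-left block of $\bar A$ acts on these "pure" monomials, which is precisely how the complementarity in $TCP(q,\mathcal{A})$ translates into the complementarity in $LCP(\bar q,\bar A)$. This is guaranteed by clause (1)--(2) of the definition of $>_{mglo}$, and once it is invoked, each of the three LCP conditions reduces to one of the three conditions already known to hold for $x$.
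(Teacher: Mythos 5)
Your proposal is correct and follows essentially the same route as the paper's proof: both verify the three LCP conditions directly via the identities $y_i = x_i^{m-1}$ and $(\bar A y)_i + \bar q_i = (\mathcal{A}x^{m-1})_i + q_i$ for $i\in[n]$, together with the zero rows of $\bar A$ for $i>n$. Your explicit justification that $x^T\omega=0$ forces each term $x_i\omega_i$ to vanish is a small point the paper states without comment, but the argument is the same.
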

\begin{proof}
Let $x\in SOL(q,\mathcal{A}) $, then $x \geq 0, ~~ \omega= \mathcal{A}x^{m-1} +q \geq 0$, and $ x^T \omega = 0 $. Therefore for  $x\in SOL(q,\mathcal{A}) $,
\begin{equation}\label{component of x}
    x_i \geq 0,\; \forall \; i \in[n]
\end{equation} 
\begin{equation}\label{component of w}
    \omega_i = (\mathcal{A}x^{m-1})_i +q_i \geq 0, \; \forall \; i \in[n]
\end{equation} 
and
\begin{equation}\label{complementarity condition}
    x_i \omega_i = x_i ((\mathcal{A}x^{m-1})_i +q_i) =0,\; \forall \; i \in[n] .
\end{equation}
Now by the construction of the vector $y \in \mathbb{R}^N$, it is clear that,
\begin{equation}\label{component of y}
    y_i \geq 0, \; \forall \; i \in[N],\mbox{ by inequalities }(\ref{component of x}) .
\end{equation} 
Now $w_i = (\Bar{A}y)_i + \Bar{q}_i = \left\{
\begin{array}{ll}
	 (A y)_i + q_i  \; & ; \; \forall \; i\in [n] \\
	 0 \;  & ; \; \forall \; i\in [N]\backslash [n]
	   \end{array}
 \right. $. Since $A y = \mathcal{A}x^{m-1} $ which gives $ (A y)_i = (\mathcal{A}x^{m-1})_i ,\; \forall \; i\in [n] $. This implies 
 \[w_i = \left\{
\begin{array}{ll}
	 (\mathcal{A}x^{m-1})_i + q_i  \;  & ; \; \forall \; i\in [n] \\
	 0 \;   & ; \; \forall \; i\in [N]\backslash [n]
	   \end{array}
 \right. =  \left\{
\begin{array}{ll}
	 \omega_i  \;  & ; \; \forall \; i\in [n] \\
	 0 \;   & ; \; \forall \; i\in [N]\backslash [n]
	   \end{array}
 \right. .\]
\noindent Therefore by inequalities (\ref{component of w})
\begin{equation}\label{component of w bar}
    w_i = (\Bar{A}y)_i + \Bar{q}_i \geq 0, \; \forall \; i \in[N].
\end{equation}
\noindent Finally, \begin{align*}
     y_i w_i = y_i ((\Bar{A}y)_i + \Bar{q}_i) 
= & \left\{
\begin{array}{ll}
	 y_i ((Ay)_i + q_i)  \;  & ; \; \forall \; i\in [n] \\
	 0 \;   & ; \; \forall \; i\in [N]\backslash [n]
	   \end{array}
 \right.\\
 = & \left\{
\begin{array}{ll}
	 x_i^{m-1}((\mathcal{A}x^{m-1})_i +q_i)  \;  & ; \; \forall \; i\in [n], \mbox{ since } y_i = x_i^{m-1}, \; \forall \; i \in [n] \\
	 0 \;   & ; \; \forall \; i\in [N]\backslash [n]
	   \end{array}
 \right..
 \end{align*}
Therefore by equations (\ref{complementarity condition}) we obtain,
\begin{equation}\label{complementarity condition of conv prob}
    y_i w_i = y_i ((\Bar{A}y)_i + \Bar{q}_i) = 0 ,\; \forall \; i \in[N].
\end{equation}
Hence by equations (\ref{component of y}), (\ref{component of w bar}) and (\ref{complementarity condition of conv prob}) we conclude $y \in SOL(\Bar{q},\Bar{A}) $.

\end{proof}

We illustrate Theorem 3.7 with the help of two examples.

\begin{example}
Let $\mathcal{A} \in T_{3,2}$ such that $a_{111}=a_{222}=1 $ and all other entries of $\mathcal{A}$ are zeros. Then $\mathcal{A}x^2$ consists of homogeneous polynomials in $x_1, x_2$ of degree $2$. Then the monomials in modified graded lexicographic order are $x_1^2,\; x_2^2,\; x_1x_2$. Then we can represent $\mathcal{A}x^2$ as
\[ \mathcal{A}x^2 =
\left( \begin{array}{c}
    x_1^2 \\
    x_2^2
\end{array} 
\right) =
\left( \begin{array}{ccc}
    1 & 0 & 0 \\
    0 & 1 & 0
\end{array} \right)
\left( \begin{array}{c}
    x_1^2 \\
    x_2^2 \\
    x_1x_2
\end{array} 
\right) =
\left( \begin{array}{ccc}
    1 & 0 & 0 \\
    0 & 1 & 0
\end{array} \right)
\left( \begin{array}{c}
    y_1 \\
    y_2 \\
    y_3
\end{array} 
\right)
\]
where $y_1 = x_1^2, \; y_2 = x_2^2,$ and $y_3 = x_1x_2 .$ Then $A=\left( \begin{array}{ccc}
    1 & 0 & 0 \\
    0 & 1 & 0
\end{array} \right)$ 
and for $y=(y_1, y_2, y_3)^T$ we have $\mathcal{A}x^2 = A y$.

\noindent Now we construct $\Bar{A}=\left( \begin{array}{ccc}
    1 & 0 & 0 \\
    0 & 1 & 0 \\
    0 & 0 & 0
\end{array} \right)$,
and for $q=(0,-1)^T$ we construct $\Bar{q}=(0, -1, 0)^T$.
Then solution set of $TCP(q,\mathcal{A})= SOL(q,\mathcal{A}) = \{ (0,1)^T \}$ and solution set of $LCP(\Bar{q},\Bar{A})= SOL(\Bar{q},\Bar{A}) = \{ (0,1, y_3)^T, y_3 \geq 0 \}$. Here for $x=(0,1)^T \in SOL(q,\mathcal{A}) $, we have $y=(0,1,0)^T \in SOL(\Bar{q},\Bar{A}) $.
\end{example}

\begin{example}
Let $\mathcal{A} \in T_{4,2}$ such that $a_{1111}=1, a_{1112}=-2, a_{1122}=1, a_{2222}=1 $ and all other entries of $\mathcal{A}$ are zeros. Then $\mathcal{A}x^3$ consist of homogeneous polynomials in $x_1, x_2 $ of degree $3$. Then the monomials in modified graded lexicographic order are $x_1^3,\; x_2^3,\; x_1^2 x_2,\; x_1 x_2^2$. Then we can represent $\mathcal{A}x^3$ as
\begin{align*}
    \mathcal{A}x^3 = & \left( \begin{array}{c}
    x_1^3  +  0\cdot x_2^3  -2x_1^2 x_2  +  x_1 x_2^2\\
    0\cdot x_1^3  +  x_2^3  +  0\cdot x_1^2 x_2  +  0\cdot x_1 x_2^2
\end{array} 
\right)\\
= &\left( \begin{array}{cccc}
    1 & 0 & -2 & 1 \\
    0 & 1 & 0 & 0
\end{array} \right)
\left( \begin{array}{c}
    x_1^3 \\
    x_2^3 \\
    x_1^2 x_2 \\
    x_1 x_2^3
\end{array} 
\right)\\
= & \left( \begin{array}{cccc}
    1 & 0 & -2 & 1 \\
    0 & 1 & 0 & 0
\end{array} \right)
\left( \begin{array}{c}
    y_1 \\
    y_2 \\
    y_3 \\
    y_4
\end{array} 
\right)
\end{align*}
where $y_1 = x_1^3, \; y_2 = x_2^3, \; y_3 = x_1^2 x_2, \; y_4=x_1 x_2^2 $. Let $A=\left( \begin{array}{cccc}
    1 & 0 & -2 & 1 \\
    0 & 1 & 0 & 0
\end{array} \right)$ 
then for $y=(y_1, y_2, y_3, y_4)^T$ we have $\mathcal{A}x^3 = A y$.

\noindent Now we construct $\Bar{A}=\left( \begin{array}{cccc}
    1 & 0 & -2 & 1 \\
    0 & 1 & 0 & 0 \\
    0 & 0 & 0 & 0 \\
    0 & 0 & 0 & 0
\end{array} \right)$,
and for $q=(0,-1)^T$ we construct $\Bar{q}=(0, -1, 0, 0)^T$.
Then solution set of the $TCP(q,\mathcal{A})= SOL(q,\mathcal{A}) = \{ (0,1)^T, (1,1)^T \}$ from (\cite{bai2016global}). The $LCP(\Bar{q},\Bar{A})$ is the quest to find $y\in \mathbb{R}^4$ such that $y\geq 0, \; \Bar{w}=\Bar{A} y +\Bar{q} \geq 0 $ and $y^T \Bar{w} =0 $,
where $\Bar{w}=\Bar{A} y +\Bar{q} = 
\left( \begin{array}{c}
    y_1 +0 -2y_3 +y_4   \\
    y_2 - 1 \\
    0 + 0 \\
    0 + 0
\end{array} \right)$ $= \left( \begin{array}{c}
    y_1 -2y_3 +y_4  \\
    y_2 - 1 \\
    0 \\
    0
\end{array} \right)$.

\noindent The solution set of $LCP(\Bar{q},\Bar{A})= SOL(\Bar{q},\Bar{A}) = \{ (2y_3-y_4, 1, y_3, y_4)^T, 2y_3 \geq y_4 \geq 0 \} \cup \{ (0,1,y_3,y_4) : y_4 \geq 2y_3 \geq 0 \} $. Here for $x=(0,1)^T \in SOL(q,\mathcal{A}) $, we have $y=(0,1,0,0)^T \in SOL(\Bar{q},\Bar{A}) $ and for $x=(1,1)^T \in SOL(q,\mathcal{A}) $, we have $y=(1,1,1,1)^T \in SOL(\Bar{q},\Bar{A}) $.
\end{example}

The following result establishes the $\omega$-uniqueness of $TCP(q,\mathcal{A}).$
\begin{theorem}\label{w uniqueness of tcp by lcp}
Let $LCP(\Bar{q},\Bar{A})$ be the auxiliary $LCP$ of  $TCP(q,\mathcal{A})$. If $LCP(\Bar{q},\Bar{A})$ has unique $w$-solution for all $\Bar{q} \in \mathbb{R}^N$ then $TCP(q,\mathcal{A})$ has unique $\omega$-solution (if exists) for all $q \in \mathbb{R}^n$.
\end{theorem}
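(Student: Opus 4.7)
The plan is to argue by contradiction, leveraging Theorem~\ref{solution of LCP implies solution of TCP} as the bridge between $TCP(q,\mathcal{A})$ and its auxiliary $LCP(\Bar{q},\Bar{A})$. Suppose the hypothesis holds, yet for some $q \in \mathbb{R}^n$ the problem $TCP(q,\mathcal{A})$ admits two solution pairs $(\omega^{(1)}, x^{(1)})$ and $(\omega^{(2)}, x^{(2)})$ with $\omega^{(1)} \neq \omega^{(2)}$. The goal is to manufacture, from these, two solutions of the auxiliary $LCP$ with distinct $w$-components, which would contradict the assumed $w$-uniqueness.

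First, I would construct the corresponding auxiliary vector $\Bar{q} \in \mathbb{R}^N$ by padding $q$ with zeros as in the setup preceding~(\ref{auxiliary LCP}). For each $k \in \{1,2\}$, I would form the vector $y^{(k)} \in \mathbb{R}^N$ whose entries are the monomials $x^{\alpha}$ of $x^{(k)}$ of degree $m-1$, ordered according to the modified graded lexicographic order. Theorem~\ref{solution of LCP implies solution of TCP} then yields $y^{(k)} \in SOL(\Bar{q},\Bar{A})$ for $k = 1, 2$. Let $w^{(k)} := \Bar{A} y^{(k)} + \Bar{q}$ be the associated $w$-components of these $LCP$ solutions.

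Next, I would invoke the identity already established inside the proof of Theorem~\ref{solution of LCP implies solution of TCP}, namely
\[
w^{(k)}_i = \begin{cases} \omega^{(k)}_i & ;\; i \in [n], \\ 0 & ;\; i \in [N]\setminus [n], \end{cases}
\]
which follows from $\Bar{A} y = \bigl(A y, \, 0 \bigr)^T = \bigl(\mathcal{A}x^{m-1},\, 0\bigr)^T$ together with the padding of $\Bar{q}$. Since $\omega^{(1)} \neq \omega^{(2)}$ by assumption, their first $n$ coordinates differ in at least one position, and therefore $w^{(1)} \neq w^{(2)}$. Thus $(w^{(1)}, y^{(1)})$ and $(w^{(2)}, y^{(2)})$ are two solutions of $LCP(\Bar{q},\Bar{A})$ with distinct $w$-components, contradicting the hypothesis. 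Hence $\omega^{(1)} = \omega^{(2)}$, establishing $\omega$-uniqueness for $TCP(q,\mathcal{A})$.

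No step here presents a serious obstacle: the genuine work has already been carried out in Theorem~\ref{solution of LCP implies solution of TCP}, which both converts $TCP$ solutions into $LCP$ solutions and pins down the explicit relationship between $\omega$ and $w$ on the first $n$ coordinates. The only point requiring care is to notice that $\omega^{(1)} \neq \omega^{(2)}$ at the $TCP$ level is automatically inherited by the padded $w^{(1)}, w^{(2)}$ at the $LCP$ level, so the implication runs in exactly the direction needed; this is the reason the converse direction (from $\omega$-uniqueness of $TCP$ to $w$-uniqueness of $LCP$) would not follow from the same argument, since the auxiliary $LCP$ lives in the larger space $\mathbb{R}^N$.
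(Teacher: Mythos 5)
Your argument is correct and is essentially the paper's own proof in contrapositive form: both rely on Theorem~\ref{solution of LCP implies solution of TCP} to send each $TCP$ solution $x$ to an auxiliary $LCP$ solution $y$ whose $w$-part agrees with $\omega$ on the first $n$ coordinates and vanishes elsewhere, so $w$-uniqueness at the $LCP$ level forces $\omega$-uniqueness at the $TCP$ level. The paper simply states this directly (all $\omega_i$ coincide with the uniquely determined $w_i$) rather than deriving a contradiction from two distinct $\omega$'s.
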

\begin{proof}
Let $ SOL(\Bar{q},\Bar{A})$ be the solution set of $LCP(\Bar{q},\Bar{A})$. Then $ SOL(\Bar{q},\Bar{A}) = \{ y\in \mathbb{R}^N :\; y\geq 0, \; w= \Bar{A}y + \Bar{q} \geq 0, ~~ y^T w =0  \}$. If $LCP(\Bar{q},\Bar{A})$ has unique $w$-solution, then $\{ w= \Bar{A}y + \Bar{q} :y\in SOL(\Bar{q},\Bar{A})\} $ is singleton and $w_i$'s are uniquely determined for each $i\in [N]$. Now we show that the set $\{ \omega= \mathcal{A}x^{m-1} + q : x\in SOL(q,\mathcal{A}) \}$ is singleton. 
For any $x\in SOL(q,\mathcal{A}) $ by Theorem \ref{solution of LCP implies solution of TCP}, $\exists \; y \in SOL(\Bar{q},\Bar{A}) $ such that
\begin{align*}
    w_i = &  \left\{
\begin{array}{ll}
	 (\Bar{A}y + \Bar{q})_i   & ; \; \forall \; i\in [n] \\
	 0    & ; \; \forall \; i\in [N]\backslash [n]
	   \end{array}
 \right.\\
 = & \left\{
\begin{array}{ll}
	 (\mathcal{A}x^{m-1})_i + q_i   & ; \; \forall \; i\in [n] \\
	 0    & ; \; \forall \; i\in [N]\backslash [n]
	   \end{array}
 \right.\\
 = & \left\{
\begin{array}{ll}
	 \omega_i    & ; \; \forall \; i\in [n] \\
	 0    & ; \; \forall \; i\in [N]\backslash [n]
	   \end{array}
 \right. .
\end{align*}
Thus the values of $\omega_i$'s are exactly same as that of $w_i, \; \forall \; i \in [n]$. Since $w_i$'s are uniquely determined for each $i\in [N]$ and for all $y \in SOL(\Bar{q},\Bar{A})$, $\omega_i$'s are uniquely determined for each $i\in [n]$ and for all $x \in SOL(q,\mathcal{A}) $. Hence $\{ \omega= \mathcal{A}x^{m-1} + q : x\in SOL(q,\mathcal{A}) \}$ is singleton. Therefore $TCP(q,\mathcal{A})$ has unique $\omega$-solution.
\end{proof} 

Now we illustrate Theorem 3.8 with the help of the following example.

\begin{example}
Let $\mathcal{A} \in T_{3,2}$ such that $a_{111}=a_{222}=1 $ and all other entries of $\mathcal{A}$ are zeros. Then the corresponding $\Bar{A}$ is obtained as  $\Bar{A}=\left( \begin{array}{ccc}
    1 & 0 & 0 \\
    0 & 1 & 0 \\
    0 & 0 & 0
\end{array} \right)$.

\noindent Then for $q \geq 0$, $SOL(\Bar{q},\Bar{A}) = \{ (0,0,y_3)^T, y_3 \geq0 \}.$ Then the $w$-solution is uniquely determined with $w=(q_1, q_2, 0)^T.$

\noindent For $q=(q_1,q_2)^T \leq 0$, $SOL(\Bar{q},\Bar{A}) = \{ (-q_1, -q_2, y_3)^T, y_3 \geq 0 \}.$ Then the $w$-solution is uniquely determined with $w=(0, 0, 0)^T.$

\noindent For $q=(q_1,q_2)^T $ such that $q_1 >0, q_2 <0$, $SOL(\Bar{q},\Bar{A}) = \{ (0 ,-q_2 ,y_3)^T$, $y_3 \geq 0 \}.$ Then the $w$-solution is uniquely determined with $w=(q_1, 0, 0)^T.$

\noindent For $q=(q_1,q_2)^T $ such that $q_1 <0, q_2 >0$, $SOL(\Bar{q},\Bar{A}) = \{ (-q_1,0,y_3)^T$, $y_3 \geq 0 \}$. Then the $w$-solution is uniquely determined with $w=(0, q_2, 0)^T.$

\noindent Thus whatever the case may be, $w$-solution of $LCP(\Bar{q},\Bar{A})$ is uniquely determined.

\noindent Here the immediate conclusion is that $\omega$-solution of $TCP(q,\mathcal{A})$ is also unique.
\end{example}

We establish a sufficient condition for column adequate tensor.
\begin{theorem}\label{adequate matrix implies adequate tensor}
Let $\mathcal{A}\in T_{m,n}$ where $m$ is even.  If $\Bar{A}$ is a column adequate matrix then $\mathcal{A}$ is a column adequate tensor.
\end{theorem}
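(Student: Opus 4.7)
The plan is to translate the tensor column adequacy hypothesis into the matrix column adequacy hypothesis on $\Bar{A}$ through the same substitution $y_j = x^{\alpha_j}$ used to set up the auxiliary $LCP$, then pull the conclusion back. Given $x \in \mathbb{R}^n$ with $x_i(\mathcal{A}x^{m-1})_i \leq 0$ for all $i \in [n]$, I would form the vector $y \in \mathbb{R}^N$ exactly as in Theorem \ref{solution of LCP implies solution of TCP}: set $y_j = x^{\alpha_j}$ with $\alpha_j$ the $j$-th multi-index in modified graded lexicographic order, so that in particular $y_j = x_j^{m-1}$ for $j \in [n]$. By the construction of $A$ we have $\mathcal{A}x^{m-1} = Ay$, and therefore
\begin{align*}
(\Bar{A}y)_i = (\mathcal{A}x^{m-1})_i \text{ for } i \in [n], \qquad (\Bar{A}y)_i = 0 \text{ for } i \in [N]\setminus[n].
\end{align*}

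Next I would verify the componentwise sign condition that triggers matrix column adequacy of $\Bar{A}$. For $i \in [N]\setminus[n]$ it is trivial since $(\Bar{A}y)_i = 0$. For $i \in [n]$ this is exactly where the parity hypothesis on $m$ is used: since $m$ is even, $m-1$ is odd and $m-2$ is a nonnegative even integer, so $x_i^{m-2} \geq 0$ and
\begin{align*}
y_i(\Bar{A}y)_i \;=\; x_i^{m-1}(\mathcal{A}x^{m-1})_i \;=\; x_i^{m-2}\bigl(x_i(\mathcal{A}x^{m-1})_i\bigr) \;\leq\; 0.
\end{align*}
Applying the assumed matrix column adequacy of $\Bar{A}$ to this $y$ forces $\Bar{A}y = 0$; restricting to the first $n$ coordinates gives $(\mathcal{A}x^{m-1})_i = 0$ for all $i \in [n]$, i.e.\ $\mathcal{A}x^{m-1} = 0$, which is exactly the conclusion needed.

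I do not expect any serious obstacle, but two small points deserve attention. The crucial one is the parity step: the argument collapses when $m-1$ is even, because then $x_i^{m-1}$ no longer tracks the sign of $x_i$ and the matrix sign hypothesis cannot be derived from the tensor sign hypothesis; this is precisely why the theorem is stated for even $m$. The second point is purely a sanity check: the substitution $y_j = x^{\alpha_j}$ only produces a special subset of vectors in $\mathbb{R}^N$, but this is harmless because the column adequacy of $\Bar{A}$ is a universal statement applying, in particular, to such $y$.
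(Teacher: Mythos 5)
Your proposal is correct and follows essentially the same route as the paper's proof: construct $y$ from $x$ via the monomial substitution, use the parity of $m$ to convert $x_i(\mathcal{A}x^{m-1})_i \leq 0$ into $y_i(\Bar{A}y)_i = x_i^{m-1}(\mathcal{A}x^{m-1})_i \leq 0$, note the extra coordinates contribute zeros, and invoke column adequacy of $\Bar{A}$ to get $\Bar{A}y=0$ and hence $\mathcal{A}x^{m-1}=0$. Your explicit factorization $x_i^{m-1}=x_i^{m-2}\,x_i$ with $x_i^{m-2}\geq 0$ just makes the parity step slightly more transparent than the paper's one-line justification.
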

\begin{proof}
Let $\Bar{A}$ be column adequate matrix. Then for $y\in \mathbb{R}^N$, $y_i (\Bar{A} y)_i \leq 0, \; \forall \; i\in [N] ~\implies ~\Bar{A} y =0$. Let for some $x\in \mathbb{R}^n,\; x_i(\mathcal{A}x^{m-1})_i \leq 0, \; \forall \; i\in [n].$ This implies $ x_i^{m-1}(\mathcal{A}x^{m-1})_i \leq 0, \; \forall \; i\in [n] $, since $m$ is even. Now after constructing $y$ with the help of $x$, we obtain  $(\mathcal{A}x^{m-1})_i=(\Bar{A} y)_i  $ and $y_i (\Bar{A} y)_i = x_i^{m-1}(\mathcal{A}x^{m-1})_i, \; \forall \; i \in [n] $. Therefore $ x_i^{m-1}(\mathcal{A}x^{m-1})_i \leq 0 \iff y_i (\Bar{A} y)_i \leq 0, \; \forall \; i \in [n]$ and by the construction of $\Bar{A}$ we have $y_i (\Bar{A} y)_i= 0, \; \forall \; i \in [N] \backslash [n]$. Therefore we obtain $y_i (\Bar{A} y)_i \leq 0, \; \forall \; i \in [N] \implies \Bar{A} y=0$, since $\Bar{A}$ is column adequate matrix. Now $\Bar{A} y=0 \implies (\Bar{A} y)_i=0, \; \forall \; i \in [N] \implies (\mathcal{A}x^{m-1})_i=0, \; \forall \; i \in [n] $. Thus $  x_i(\mathcal{A}x^{m-1})_i \leq 0, \; \forall \; i\in [n] \implies \mathcal{A}x^{m-1}=0 $. Hence $\mathcal{A}$ is a column adequate tensor.
\end{proof}

\noindent Now we establish the condition for tensor $\mathcal{A}$ under which $\Bar{A}$ is always column adequate. Consider the majorization matrix $M(\mathcal{A})$ of $\mathcal{A}$. Then it is easy to observe that $A= (M(\mathcal{A}) \;\; B)$ and then $\Bar{A} =$ $ \left( \begin{array}{cc}
    M(\mathcal{A}) & B \\
    O & O
\end{array} \right)$.

\begin{theorem}\label{condition for adequateness of block matrix }
For an $ \mathcal{A} \in T_{m,n}$ let $\Bar{A} =$ $ \left( \begin{array}{cc}
    M(\mathcal{A}) & B \\
    O & O
\end{array} \right)$. Then $\Bar{A}$ is column adequate if and only if the following two conditions hold
\begin{enumerate}[label=(\alph*)]
    \item $M(\mathcal{A})$ is column adequate.
    \item $B= O$.
\end{enumerate}
\end{theorem}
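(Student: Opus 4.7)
The plan is to split the claim into the two implications and exploit the block structure $\bar A=\bigl(\begin{smallmatrix} M(\mathcal A) & B \\ O & O \end{smallmatrix}\bigr)$, together with the definition of a column adequate matrix (Definition in the Preliminaries): a matrix $M$ is column adequate iff $z_i(Mz)_i\le 0$ for all $i$ forces $Mz=0$. Throughout, write a generic $y\in\mathbb R^N$ in block form $y=(z,v)$ with $z\in\mathbb R^n$ and $v\in\mathbb R^{N-n}$, so that $\bar A y=(M(\mathcal A)z+Bv,\,0)\in\mathbb R^n\times\mathbb R^{N-n}$.

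For the sufficiency direction $(\Leftarrow)$, assume $M(\mathcal A)$ is column adequate and $B=O$. Then $\bar A y=(M(\mathcal A)z,0)$, and the hypothesis $y_i(\bar A y)_i\le 0$ for all $i\in[N]$ reduces to $z_i(M(\mathcal A)z)_i\le 0$ for all $i\in[n]$ (the lower $N-n$ components contribute $v_j\cdot 0=0$). Column adequacy of $M(\mathcal A)$ then forces $M(\mathcal A)z=0$, so $\bar A y=0$, proving $\bar A$ is column adequate.

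For the necessity direction $(\Rightarrow)$, assume $\bar A$ is column adequate and establish $(a)$ and $(b)$ separately by choosing appropriate test vectors. For $(a)$, given any $z\in\mathbb R^n$ with $z_i(M(\mathcal A)z)_i\le 0$ for all $i\in[n]$, lift it to $y=(z,0)\in\mathbb R^N$. Because $Bv=0$ and the last $N-n$ rows of $\bar A$ vanish, one computes $y_i(\bar A y)_i=z_i(M(\mathcal A)z)_i\le 0$ for $i\in[n]$ and $=0$ for $i\in[N]\setminus[n]$, so column adequacy of $\bar A$ yields $\bar A y=0$, hence $M(\mathcal A)z=0$. For $(b)$, argue by contradiction: if some entry $b_{i_0,j_0}\ne 0$, test the standard basis vector $y=e_{n+j_0}$. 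Then $y_i=0$ for all $i\in[n]$ and $(\bar A y)_i=0$ for all $i\in[N]\setminus[n]$, so trivially $y_i(\bar A y)_i\le 0$ for every $i\in[N]$. Column adequacy of $\bar A$ would give $\bar A y=0$, but $(\bar A y)_{i_0}=b_{i_0,j_0}\ne 0$, a contradiction. Hence $B=O$.

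There is no real obstacle here; the two directions are essentially bookkeeping with the block decomposition. The only point that requires a bit of care is the choice of test vectors in the necessity direction, specifically that padding $z$ with zeros preserves the sign pattern (used for $(a)$) and that activating a single coordinate in the last $N-n$ positions produces a vector whose componentwise product with $\bar A y$ vanishes (used for $(b)$). Both observations follow immediately from the fact that the bottom block row of $\bar A$ is zero, which is the structural feature that makes this equivalence clean.
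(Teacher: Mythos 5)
Your proof is correct and follows essentially the same block-decomposition strategy as the paper: reduce everything to $M(\mathcal{A})$ via the zero bottom block, lift $z$ to $(z,0)$ for part (a), and use a test vector supported on the last $N-n$ coordinates to force $B=O$. The one substantive difference is in the argument for $B=O$: the paper tests with $y=(0,e)$ where $e$ is the all-ones vector and concludes $\bar{A}y=(Be,0)\neq 0$, which is not airtight, since $B\neq O$ does not by itself guarantee $Be\neq 0$ (the rows of $B$ could sum to zero). Your choice of the single standard basis vector $e_{n+j_0}$ targeting a specific nonzero entry $b_{i_0 j_0}$ isolates that entry as $(\bar{A}y)_{i_0}=b_{i_0 j_0}\neq 0$ and closes this gap cleanly, so your version is in fact the more careful of the two.
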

\begin{proof}
Let $\Bar{A}$ be column adequate matrix. Then for $y \in \mathbb{R}^N,$
\[ y_i (\Bar{A} y)_i \leq 0, \; \forall \; i \in [N] \implies (\Bar{A} y)_i =0, \; \forall \; i \in [N].\]
If $B \neq O$ we consider the vector $y \in \mathbb{R}^N$ such that $y= \left( \begin{array}{c}
     0_{n,1}\\
     e_{(N-n),1}
\end{array} \right)$, where $e_{(N-n),1} = (1, \cdots, 1)^T $, then
$y_i (\Bar{A} y)_i =0, \; \forall \; i \in [N].$ However

$\Bar{A} y= \left( \begin{array}{cc}
                 M(\mathcal{A})_{n,n} & B_{N-n,N-n} \\
                 O_{n,n} & O_{N-n,N-n}
                  \end{array} \right) $
$\left( \begin{array}{c}
     0_{n,1}\\
     e_{N-n,1}
\end{array} \right)$ = 
$\left( \begin{array}{c}
     (Be)_{n,1}\\
    O_{N-n,1}
\end{array} \right) \neq O_{N,1}$.
This contradicts the fact that $\Bar{A} $ is column adequate matrix. Therefore $B= O$. 

\noindent Now $B=O$ gives $\Bar{A} =$ $ \left( \begin{array}{cc}
    M(\mathcal{A}) & O \\
    O & O
\end{array} \right) .$  We prove that $M(\mathcal{A})$ is a column adequate matrix. Consider $z\in \mathbb{R}^n$ for which $ z_i (M(\mathcal{A}) z)_i \leq 0, \; \forall \; i \in [n] $. We choose $y \in \mathbb{R}^N$ such that
$y_i= \left\{
\begin{array}{ll}
	  z_i &;\; \forall \; i\in [n] \\
	  0  &; \; \forall \; i\in [N]\backslash [n]
	   \end{array}
 \right. .$
Then $(\Bar{A} y)_i = \left\{
\begin{array}{ll}
	 (M(\mathcal{A}) z)_i &;\; \forall \; i\in [n] \\
	  0  &; \; \forall \; i\in [N]\backslash[n]
	   \end{array}
 \right. .$ 
Then $ y_i (\Bar{A} y)_i \leq 0, \; \forall \; i \in [N]. \implies (\Bar{A} y)_i =0, \; \forall \; i \in [N]$, since $\Bar{A}$ is column adequate matrix. This implies $ (M(\mathcal{A}) z)_i = 0 ,\; \forall \; i\in [n].$
Hence $M(\mathcal{A})$ is column adequate matrix.

Conversely, let $M(\mathcal{A})$ is column adequate matrix and $B= O.$ Then for $y \in \mathbb{R}^N$, we define $z\in \mathbb{R}^n$ such that $z_i=y_i, \; \forall \; i \in [n]$. Then  $(\Bar{A} y)_i = \left\{
\begin{array}{ll}
	 (M(\mathcal{A}) z)_i &;\; \forall \; i\in [n] \\
	  0  &; \; \forall \; i\in [N]\backslash[n]
	   \end{array}
 \right. .$ Then \begin{align*}
   y_i (\Bar{A} y)_i \leq 0, \; \forall \; i \in [N] &  \implies z_i (M(\mathcal{A}) z)_i \leq 0, \; \forall \; i \in [n]   \\
     & \implies M(\mathcal{A}) z= 0, \mbox{ since } M(\mathcal{A}) \mbox{ is column adequate matrix}.
\end{align*}
Therefore for $y \in \mathbb{R}^N, \; y_i (\Bar{A} y)_i \leq 0, \; \forall \; i \in [N] \implies \Bar{A} y=0.$ Hence $\Bar{A}$ is column adequate matrix.
\end{proof}

\begin{remark}
If $B \neq O$, then $\Bar{A} $ is not a column adequate matrix.
\end{remark}

\begin{lemma}\label{lemma for with y bar}
Let $\Bar{A} =$ $ \left( \begin{array}{cc}
    M(\mathcal{A}) & O \\
    O & O
\end{array} \right)$
and $y \in SOL(\Bar{q},\Bar{A})$ where $y=(y_1, \cdots y_n, y_{n+1}, \cdots y_N)$. Then $\Bar{y}=(y_1, \cdots y_n, 0, \cdots, 0 )$ $\in SOL(\Bar{q},\Bar{A})$.
\end{lemma}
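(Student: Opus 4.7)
The plan is to exploit the block structure of $\Bar{A}$ directly. I would split $y$ into a top block $y' = (y_1,\dots,y_n)^T$ and a bottom block $y'' = (y_{n+1},\dots,y_N)^T$, and similarly split $\Bar{q}$ into the top block $q$ and a zero bottom block. The computation
\[
\Bar{A}y = \begin{pmatrix} M(\mathcal{A}) & O \\ O & O \end{pmatrix}\begin{pmatrix} y' \\ y'' \end{pmatrix} = \begin{pmatrix} M(\mathcal{A})y' \\ 0 \end{pmatrix}
\]
shows that $\Bar{A}y$ is independent of $y''$, and hence $\Bar{A}\Bar{y} = \Bar{A}y$ and $w := \Bar{A}y+\Bar{q} = \Bar{A}\Bar{y}+\Bar{q} =: \Bar{w}$. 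This is the key observation driving the whole argument.

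With this in hand I would verify the three defining conditions of an LCP-solution for $\Bar{y}$ in turn. Nonnegativity $\Bar{y}\geq 0$ is immediate, since the nonzero entries of $\Bar{y}$ are inherited from the nonnegative $y$. Feasibility $\Bar{w}\geq 0$ is immediate from $w\geq 0$ together with the identity $\Bar{w}=w$ established above. For the complementarity condition, I would write
\[
\Bar{y}^T\Bar{w} \;=\; (y')^T(M(\mathcal{A})y'+q) \;=\; y^Tw \;-\; (y'')^T \cdot 0 \;=\; y^Tw \;=\; 0,
\]
where the crucial cancellation uses that the last $N-n$ entries of $w$ are identically zero (again from the zero bottom blocks of $\Bar{A}$ and $\Bar{q}$), so the contribution of $y''$ to $y^Tw$ vanishes.

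There is really no substantive obstacle in this proof: it is a bookkeeping exercise once one notices that the right block-column of $\Bar{A}$ being zero makes the last $N-n$ coordinates of $y$ completely decoupled from $w$. The only thing to watch is the complementarity step, which must be phrased in terms of the vanishing of $w_i$ for $i\in [N]\setminus[n]$ rather than some property of the $y_i$'s in that range (indeed, those $y_i$'s can be arbitrary nonnegative numbers, which is precisely why zeroing them out is legal).
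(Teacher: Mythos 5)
Your proposal is correct and follows essentially the same route as the paper: both arguments rest on the observation that the zero right block-column of $\Bar{A}$ gives $\Bar{A}\Bar{y}=\Bar{A}y$, so $w$ is unchanged, and then check nonnegativity and complementarity directly. Your handling of the complementarity step (noting $w_i=0$ for $i\in[N]\setminus[n]$ so the contribution of $y''$ vanishes) is, if anything, stated more cleanly than the paper's piecewise formula for $\Bar{y}^Tw$.
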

\begin{proof}
Since $y \in SOL(\Bar{q},\Bar{A})$ then $y\geq 0, \; w =\Bar{A}y +\Bar{q} \geq 0 $ and $y^Tw =0$. Now by the construction of $\Bar{y}$ we have $\Bar{y}\geq 0$, $\Bar{A}y= \Bar{A}\Bar{y}$ and $\Bar{y}^Tw =\left\{
\begin{array}{ll}
	 (y^T w)_i &;\; \forall \; i\in [n] \\
	  0  &; \; \forall \; i\in [N]\backslash[n]
	   \end{array}
 \right. .$ Thus we obtain $\Bar{y} \geq 0, \; \Bar{A}\Bar{y} +\Bar{q}= w \geq 0 $ and $\Bar{y}^Tw =0$. Hence $\Bar{y}\in SOL(\Bar{q},\Bar{A}) $.
\end{proof}

Bellow we find an equivalent condition for column adequate tensor which ensures the $\omega$-uniqueness for TCP$(q,\mathcal{A}).$
\begin{theorem}\label{majorization adequate implies omega uniqueness of TCP}
Let $\mathcal{A} \in T_{m,n}$, where $m$ is even and the auxiliary matrix $\Bar{A}$ is of the form $\Bar{A} =$ $ \left( \begin{array}{cc}
    M(\mathcal{A}) & O \\
    O & O
\end{array} \right)$, then the following are equivalent 
\begin{enumerate}[label=(\alph*)]
    \item $M(\mathcal{A})$ is column adequate matrix.
    \item $\Bar{A}$ is column adequate matrix.
    \item $\mathcal{A}$ is column adequate tensor.
    \item $LCP(\Bar{q},\Bar{A})$ has $w$-unique solution.
    \item $TCP(q,\mathcal{A})$ has $\omega$-unique solution.
\end{enumerate}
\end{theorem}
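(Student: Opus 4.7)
The equivalences (a) $\iff$ (b) and (b) $\iff$ (d) are immediate from results already on the table: (a) $\iff$ (b) follows from Theorem \ref{condition for adequateness of block matrix } because the hypothesis on $\bar{A}$ is precisely that the upper-right block $B$ vanishes, and (b) $\iff$ (d) is Theorem \ref{matrix w uniqueness result} applied to the $N\times N$ matrix $\bar{A}$. Furthermore, (b) $\implies$ (c) is Theorem \ref{adequate matrix implies adequate tensor} (which is where the assumption ``$m$ even'' enters), and (d) $\implies$ (e) is Theorem \ref{w uniqueness of tcp by lcp}. Thus only (c) $\implies$ (a) and (e) $\implies$ (a) need fresh proofs, and I plan to close both by exploiting a single structural identity.

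That identity is
\[ \mathcal{A}x^{m-1} \;=\; M(\mathcal{A})\,x^{[m-1]}, \qquad x\in\mathbb{R}^n, \]
which falls out of the block form $\bar{A}=\bigl(\begin{smallmatrix} M(\mathcal{A}) & O \\ O & O\end{smallmatrix}\bigr)$: having $B=O$ forces the coefficient of every mixed monomial $x^{\alpha}$ (those $\alpha$ with at least two nonzero components) in $\mathcal{A}x^{m-1}$ to vanish, so only the pure powers $x_j^{m-1}$ survive, with coefficients $a_{ij\cdots j}=M(\mathcal{A})_{ij}$. Since $m$ is even, $m-1$ is odd, so $t\mapsto t^{m-1}$ is a sign-preserving bijection on $\mathbb{R}$; this is what will let me move freely between the ``tensor coordinates'' $x$ and the ``matrix coordinates'' $z=x^{[m-1]}$.

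For (c) $\implies$ (a): given $z\in\mathbb{R}^n$ with $z_i(M(\mathcal{A})z)_i\leq 0$ for every $i$, I will set $x_i=\mathrm{sign}(z_i)\,|z_i|^{1/(m-1)}$, so that $x_i^{m-1}=z_i$ and $\mathrm{sign}(x_i)=\mathrm{sign}(z_i)$. The displayed identity then gives $(\mathcal{A}x^{m-1})_i=(M(\mathcal{A})z)_i$, so $x_i(\mathcal{A}x^{m-1})_i$ has the same sign as $z_i(M(\mathcal{A})z)_i$, hence is $\le 0$ for every $i$. Column adequacy of $\mathcal{A}$ then forces $\mathcal{A}x^{m-1}=0$, which reads $M(\mathcal{A})z=0$; this gives (a).

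For (e) $\implies$ (a): I will show that $\omega$-uniqueness of $TCP(q,\mathcal{A})$ for every $q\in\mathbb{R}^n$ implies $w$-uniqueness of $LCP(q,M(\mathcal{A}))$ for every $q$, and then invoke Theorem \ref{matrix w uniqueness result}. For any $q\in\mathbb{R}^n$, each $z\in \mathrm{SOL}(q,M(\mathcal{A}))$ lifts via $x_i=z_i^{1/(m-1)}\geq 0$ to a vector $x\geq 0$ satisfying $\mathcal{A}x^{m-1}+q=M(\mathcal{A})z+q\geq 0$, and the complementarity $z^T(M(\mathcal{A})z+q)=0$ carries over to $x^T(\mathcal{A}x^{m-1}+q)=0$ since $x_i=0\iff z_i=0$; so $x\in\mathrm{SOL}(q,\mathcal{A})$ with $\omega=w$. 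Two LCP solutions producing different $w$'s would therefore produce two TCP solutions with different $\omega$'s, contradicting (e). The one spot where care is needed is the bookkeeping between the two coordinate systems; the hypothesis that $m$ is even is precisely what makes $x\leftrightarrow x^{[m-1]}$ both sign-preserving and unambiguous, so this is a careful-accounting hurdle rather than a conceptual one.
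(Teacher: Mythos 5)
Your proposal is correct, and it reuses the same four previously established results for (a)$\iff$(b), (b)$\iff$(d), (b)$\implies$(c), (d)$\implies$(e) that the paper does; the engine for the remaining implications --- the identity $\mathcal{A}x^{m-1}=M(\mathcal{A})x^{[m-1]}$ together with the fact that $t\mapsto t^{m-1}$ is an odd-degree bijection --- is also the paper's. The difference is in the routing of the two fresh implications. The paper closes the cycle with (c)$\implies$(b) and (e)$\implies$(d), i.e., it stays in the $N$-dimensional world of $\bar{A}$ and $LCP(\bar{q},\bar{A})$; for (e)$\implies$(d) it must therefore first truncate an arbitrary solution $y\in SOL(\bar{q},\bar{A})$ to $\bar{y}=(y_1,\dots,y_n,0,\dots,0)$ and prove a separate lemma (Lemma \ref{lemma for with y bar}) that $\bar{y}$ is still a solution, before taking $(m-1)$-th roots. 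You instead close the cycle with (c)$\implies$(a) and (e)$\implies$(a), working directly with the $n\times n$ matrix $M(\mathcal{A})$ and the reduced $LCP(q,M(\mathcal{A}))$; this makes the truncation lemma unnecessary and keeps the sign bookkeeping in $n$ coordinates, at the cost of having to note explicitly (as you do) that an LCP solution $z$ is nonnegative so that $x=z^{[1/(m-1)]}$ is a legitimate feasible point of the TCP and that $x_i=0\iff z_i=0$ preserves complementarity. Both arguments are sound; yours is marginally leaner, the paper's keeps everything phrased through the auxiliary LCP it has built up.
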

\begin{proof}
\noindent $ (a) \iff (b) : $ By Theorem \ref{condition for adequateness of block matrix }.

\noindent $(b) \implies (c) : $ By Theorem \ref{adequate matrix implies adequate tensor}.

\noindent $(c)\implies (b) : $ Let $\mathcal{A}$ be a tensor such that the auxiliary matrix $\Bar{A}$ is of the form $\Bar{A} =$ $ \left( \begin{array}{cc}
    M(\mathcal{A}) & O \\
    O & O
\end{array} \right)$. Then for $i\in[n]$,
\begin{equation}\label{equation for main result}
    (\mathcal{A}x^{m-1})_i = (\Bar{A}y)_i = (M(\mathcal{A})\mathcal{I}_m x^{m-1})_i.
\end{equation}

\noindent Let $\mathcal{A}$ be column adequate tensor then $\forall \; i \in [n]$, $x^{m-1}_i(\mathcal{A}x^{m-1})_i \leq 0 \implies \mathcal{A}x^{m-1}=0$.
For auxiliary matrix $\Bar{A}$, $y_i(\Bar{A}y)_i=\left\{
\begin{array}{ll}
	  x^{m-1}_i(\mathcal{A}x^{m-1})_i \; ; & \forall \; i\in [n] \\
	  0 \; ;  & \forall \; i\in [N]\backslash [n]
	   \end{array}
 \right.$.
Then $y_i(\Bar{A}y)_i \leq 0,\; \forall \; i \in [N] \implies x^{m-1}_i(\mathcal{A}x^{m-1})_i \leq 0, \; \forall \; i \in [n]$. This implies $\mathcal{A}x^{m-1}=0 $, since $\mathcal{A}$ is column adequate tensor. This implies $ M(\mathcal{A})\mathcal{I}_n x^{m-1} = O \implies (\Bar{A}y)_i =0, \; \forall \; i\in [n]  $, by (\ref{equation for main result}). By the construction of $\Bar{A}$, for all $y \in \mathbb{R}^N$ we have $(\Bar{A}y)_i =0, \; \forall \; i\in [N]\backslash [n] $. Thus $y_i(\Bar{A}y)_i \leq 0,\; \forall \; i \in [N] \implies (\Bar{A}y)_i = O$. Hence $\Bar{A}$ is column adequate matrix.

\noindent $(b) \iff (d) : $ By Theorem \ref{matrix w uniqueness result}.

\noindent $(d) \implies (e) : $ By Theorem \ref{w uniqueness of tcp by lcp}.

\noindent $(e) \implies (d) : $
Let $TCP(q,\mathcal{A})$ has $\omega$-unique solution. We prove that $LCP(\Bar{q},\Bar{A})$ has $w$-unique solution. Consider the solution set of $LCP(\Bar{q},\Bar{A})$ as $SOL(\Bar{q},\Bar{A})$. Then for $y\in SOL(\Bar{q},\Bar{A}) $ we get $w=\Bar{A}y + \Bar{q}$. If possible let $w$ be not unique. Non-uniqueness of $w$ implies there exists $y^1, y^2 \in SOL(\Bar{q},\Bar{A}) $ such that $w^1 \neq w^2$, where $w^1= \Bar{A}y^1 + \Bar{q} $ and $w^2= \Bar{A}y^2 + \Bar{q} $. Since $w^1 \neq w^2$ and $w_i=0, \; \forall \; i \in [N] \backslash [n]$, then $\exists \; k \in [n]$ such that $w^1_k \neq w^2_k$. Since $y^1,y^2 \in SOL(\Bar{q},\Bar{A}) $ then by Lemma \ref{lemma for with y bar} $\Bar{y}^1,\Bar{y}^2 \in SOL(\Bar{q},\Bar{A}) $. Then we obtain two vector $x^1, \; x^2 \;\in \mathbb{R}^n$ such that $x^1_i = (\Bar{y}^{1^{[\frac{1}{m-1}]}})_i$ and $x^2_i = (\Bar{y}^{2^{[\frac{1}{m-1}]}})_i,$ $\forall \; i \in [n].$ Now, 
\begin{align*}
w^1_k & = (\Bar{A}y^1 + \Bar{q})_k \\ 
                & = (\Bar{A}\Bar{y}^1 + \Bar{q})_k\\
                & = (M(\mathcal{A})\Bar{y}^1 + q)_k\\
                & = (M(\mathcal{A})x^{1^{[m-1]}} + q)_k\\
                & = (\mathcal{A}x^{1^{m-1}} + q)_k
\end{align*}
and 
\begin{align*}
w^2_k & = (\Bar{A}y^2 + \Bar{q})_k \\ 
                & = (\Bar{A}\Bar{y}^2 + \Bar{q})_k\\
                & = (M(\mathcal{A})\Bar{y}^2 + q)_k\\
                & = (M(\mathcal{A})x^{2^{[m-1]}} + q)_k\\
                & = (\mathcal{A}x^{2^{m-1}} + q)_k
\end{align*}

\noindent Since $TCP(q,\mathcal{A})$ has $\omega$-unique solution, then $(\mathcal{A}x^{1^{m-1}} + q)_k = (\mathcal{A}x^{2^{m-1}} + q)_k $. This contradicts the fact that $w^1_k \neq w^2_k$. Therefore $w$ is unique.
\end{proof}

\begin{lemma}\label{row diagonal lemma}
Let $\mathcal{A}\in T_{m,n}$ be even order row diagonal. $\mathcal{A}$ is column adequate tensor if and only if $M(\mathcal{A})$ is column adequate matrix.
\end{lemma}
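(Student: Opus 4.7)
The plan is to exploit Theorem 2.1, which tells us that any row diagonal tensor satisfies $\mathcal{A}=M(\mathcal{A})\mathcal{I}_m$. Unwinding the definition of $\mathcal{A}x^{m-1}$ under this structure, every nonzero term of the sum collapses onto the diagonal $i_2=\cdots=i_m=j$, giving
\[ (\mathcal{A}x^{m-1})_i \;=\; \sum_{j=1}^n a_{ij\cdots j}\, x_j^{m-1} \;=\; \bigl(M(\mathcal{A})\, x^{[m-1]}\bigr)_i, \]
where $x^{[m-1]}$ is the vector with entries $x_j^{m-1}$. This identity is the entire geometric content of the lemma, because it reduces the tensor condition on $x$ to a matrix condition on $z:=x^{[m-1]}$.

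The bridge between the two sign conditions relies on $m$ being even, so that $m-1$ is odd and the map $t\mapsto t^{m-1}$ is a sign-preserving bijection on $\mathbb{R}$. I will use the factorization $x_i^{m-1}=x_i\,|x_i|^{m-2}$ with $|x_i|^{m-2}\geq 0$ to obtain the key identity
\[ z_i\,(M(\mathcal{A})\,z)_i \;=\; |x_i|^{m-2}\, x_i\,(\mathcal{A}x^{m-1})_i, \]
so that the sign conditions $x_i(\mathcal{A}x^{m-1})_i\leq 0$ and $z_i(M(\mathcal{A})z)_i\leq 0$ are equivalent up to a nonnegative scalar per coordinate.

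For the forward direction, I assume $\mathcal{A}$ is column adequate and take any $z\in\mathbb{R}^n$ with $z_i(M(\mathcal{A})z)_i\leq 0$. Using that $m-1$ is odd I set $x_i:=z_i^{1/(m-1)}$, so that $x^{[m-1]}=z$. The key identity then gives $x_i(\mathcal{A}x^{m-1})_i\leq 0$ for every $i$; column adequacy of $\mathcal{A}$ yields $\mathcal{A}x^{m-1}=0$, which by the row-diagonal identity is exactly $M(\mathcal{A})z=0$. For the reverse direction, I assume $M(\mathcal{A})$ is column adequate and take $x\in\mathbb{R}^n$ with $x_i(\mathcal{A}x^{m-1})_i\leq 0$. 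Setting $z:=x^{[m-1]}$ and using the identity in the other direction gives $z_i(M(\mathcal{A})z)_i\leq 0$; column adequacy of the matrix yields $M(\mathcal{A})z=0$, hence $\mathcal{A}x^{m-1}=M(\mathcal{A})x^{[m-1]}=0$.

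There is no genuine obstacle here: the proof is essentially the row-diagonal collapse packaged with the odd-power sign-preservation. The only care needed is to state explicitly that the factor $|x_i|^{m-2}$ is nonnegative so that the implication between the two sign conditions is truly an equivalence, and to note that the parity hypothesis on $m$ is precisely what makes $t\mapsto t^{m-1}$ a bijection on all of $\mathbb{R}$ (rather than only on $\mathbb{R}_+$), which is what allows the forward direction to start from an \emph{arbitrary} $z\in\mathbb{R}^n$ rather than only from nonnegative ones.
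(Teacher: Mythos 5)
Your proof is correct and follows essentially the same route as the paper: both arguments rest on the row-diagonal collapse $\mathcal{A}x^{m-1}=M(\mathcal{A})x^{[m-1]}$ together with the fact that $t\mapsto t^{m-1}$ is a sign-preserving bijection on $\mathbb{R}$ when $m$ is even. Your explicit factorization $x_i^{m-1}=x_i\,|x_i|^{m-2}$ just spells out the sign equivalence that the paper's proof uses implicitly when it passes from $x_i^{m-1}(\mathcal{A}x^{m-1})_i\leq 0$ to the column-adequacy hypothesis stated in terms of $x_i(\mathcal{A}x^{m-1})_i\leq 0$.
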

\begin{proof}
Let $\mathcal{A}$ be row diagonal. Then $\mathcal{A}=M(\mathcal{A})\mathcal{I}_m$. Then $\mathcal{A} x^{m-1} = M(\mathcal{A})\mathcal{I}_m x^{m-1}$ $= M(\mathcal{A}) y $, where $x=(x_1, x_2, ..., x_n)^T$ and $y= x^{[m-1]} = (x_1^{m-1},x_2^{m-1},..., x_n^{m-1})^T$. 

\noindent Let $\mathcal{A}$ be column adequate tensor, i.e. $x_i (\mathcal{A} x^{m-1})_i \leq 0$ $\forall \; i\in[n]$ $ \implies \mathcal{A}x^{m-1} = 0 $.
Suppose for some $y \in \mathbb{R}^n $ we have $y_i (M(\mathcal{A}) y)_i \leq 0.$ Since $m$ is even, $\exists \; x= y^{[\frac{1}{m-1}]} = (y_1^{\frac{1}{m-1}},\;  y_2^{\frac{1}{m-1}}, \; ...,\;  y_n^{\frac{1}{m-1}} )^T$ such that 
\[ y_i (M(\mathcal{A})y)_i = x_i^{m-1} (M(\mathcal{A}) \mathcal{I}_m x^{m-1})_i = x_i^{m-1} (\mathcal{A} x^{m-1})_i . \]
Thus $y_i (M(\mathcal{A})y)_i = x_i^{m-1} (\mathcal{A} x^{m-1})_i  \leq 0$  $\mbox{ } \forall \; i\in [n]$ $\implies \mathcal{A} x^{m-1}= 0 \implies M(\mathcal{A})y = 0 $.
\noindent Therefor, $M(\mathcal{A})$ is column adequate matrix.

Conversely, let $M(\mathcal{A})$ be column adequate matrix then
\begin{align*}
     x_i^{m-1} (\mathcal{A} x^{m-1})_i & = y_i (M(\mathcal{A})y)_i  \leq 0, \;\; \forall \; i\in [n]  \\
    &\implies M(\mathcal{A})y = 0 \\
    &\implies \mathcal{A} x^{m-1} = 0.
\end{align*}
 Therefore $\mathcal{A}$ is column adequate tensor.
\end{proof}

\begin{example}
Let $\mathcal{A} \in T_{4,2}$ such that $a_{1111}= a_{2222}=1, \; a_{1222}= a_{2111}=-1, \;a_{1121}=3, \;a_{1112}=-2, \;a_{1211}=-1 $ and all other entries of $\mathcal{A}$ are zeros. Then $\mathcal{A}x^3$ consists of homogeneous polynomials in $x_1,\; x_2$ of degree $3$. Then the monomials in modified graded lexicographic order are $x_1^3,\; x_2^3,\; x_1^2 x_2,\; x_1 x_2^2$. We can represent $\mathcal{A}x^3$ as 
\begin{align*}
    \mathcal{A}x^3 = &
\left( \begin{array}{c}
   1 \cdot x_1^3 + -1\cdot x_2^3  + 0 \cdot x_1^2 x_2  + 0 \cdot x_1 x_2^2\\
  -1 \cdot x_1^3 + 1 \cdot x_2^3  +  0 \cdot x_1^2 x_2  +  0 \cdot x_1 x_2^2
\end{array} 
\right)\\
= & \left( \begin{array}{cccc}
    1 & -1 & 0 & 0 \\
    -1 & 1 & 0 & 0
\end{array} \right)
\left( \begin{array}{c}
    x_1^3 \\
    x_2^3 \\
    x_1^2 x_2 \\
    x_1 x_2^3
\end{array} 
\right)\\
= & \left( \begin{array}{cccc}
    1 & -1 & 0 & 0 \\
    -1 & 1 & 0 & 0
\end{array} \right)
\left( \begin{array}{c}
    y_1 \\
    y_2 \\
    y_3 \\
    y_4
\end{array} 
\right)
\end{align*}
where $y_1 = x_1^3, \; y_2 = x_2^3, \; y_3 = x_1^2 x_2$ and $ y_4=x_1 x_2^2 $. Let $A=\left( \begin{array}{cccc}
    1 & -1 & 0 & 0 \\
    -1 & 1 & 0 & 0
\end{array} \right).$ 
For $y=(y_1, y_2, y_3, y_4)^T$ we have $\mathcal{A}x^3 = A y$.

\noindent Now we construct $\Bar{A}=\left( \begin{array}{cccc}
    1 & -1 & 0 & 0 \\
    -1 & 1 & 0 & 0 \\
    0 & 0 & 0 & 0 \\
    0 & 0 & 0 & 0
\end{array} \right)$.
Then $M(\mathcal{A})=\left( \begin{array}{cc}
    1 & -1  \\
    -1 & 1 
\end{array} \right)$ and $B=\left( \begin{array}{cc}
    0 & 0  \\
    0 & 0 
\end{array} \right)$. $\Bar{A}$ is column adequate matrix, since $M(\mathcal{A})$ is column adequate matrix. Therefore $\mathcal{A}$ is column adequate tensor and $TCP(q,\mathcal{A})$ has unique $\omega$-solution by Theorem \ref{majorization adequate implies omega uniqueness of TCP}.


\end{example}

Bellow we establish a sufficient condition for $\omega$-unique solution of $TCP(q, \mathcal{A}).$
\begin{theorem}
Let $\mathcal{A}\in T_{m,n}$ be even order row diagonal. Then the following are equivalent:
\begin{enumerate}[label=(\alph*)]
    \item $M(\mathcal{A})$ is a column adequate matrix.
    \item $\mathcal{A}$ is a  column adequate tensor.
    \item The $TCP(q,\mathcal{A})$ has the $\omega$-unique solution.
\end{enumerate}
\end{theorem}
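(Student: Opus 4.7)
The plan is to reduce the statement to results already in hand, namely Lemma~\ref{row diagonal lemma} and Theorem~\ref{majorization adequate implies omega uniqueness of TCP}. The equivalence $(a)\iff(b)$ is literally the content of Lemma~\ref{row diagonal lemma}, so nothing new has to be done there; I would simply cite it.

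For the remaining equivalence $(b)\iff(c)$, the key observation is that row diagonality forces the auxiliary matrix $\bar A$ to be of exactly the block form required by Theorem~\ref{majorization adequate implies omega uniqueness of TCP}. More precisely, since $\mathcal{A}$ is row diagonal, the entry $a_{i\,i_2\cdots i_m}$ is nonzero only when $i_2=\cdots=i_m$. Consequently,
\[ (\mathcal{A}x^{m-1})_i \;=\; \sum_{k=1}^n a_{i\,k\cdots k}\, x_k^{m-1} \;=\; \sum_{k=1}^n M(\mathcal{A})_{ik}\, x_k^{m-1}, \]
so the coefficient $C^i_\alpha$ attached to any monomial $x^\alpha$ with at least two distinct variables present (that is, any column of $A$ beyond the first $n$) is $0$. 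In the ordering chosen in the paper, the first $n$ columns of $A$ correspond precisely to the monomials $x_1^{m-1},\ldots,x_n^{m-1}$, and their coefficient block is exactly $M(\mathcal{A})$. Hence $A=(M(\mathcal{A})\;\; O)$ and
\[ \bar A \;=\; \begin{pmatrix} M(\mathcal{A}) & O \\ O & O \end{pmatrix}, \]
which is the hypothesis of Theorem~\ref{majorization adequate implies omega uniqueness of TCP}.

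With $\bar A$ in this form (and $m$ even by assumption), Theorem~\ref{majorization adequate implies omega uniqueness of TCP} gives the chain of equivalences $(a)\iff(b)\iff(c)$ for free, so the proof is concluded. The only genuine content I need to verify carefully is the block structure of $\bar A$ in the row-diagonal case; this is the one step where I have to be explicit about the modified graded lexicographic ordering of the monomials, since the ordering is what guarantees that the monomials $x_k^{m-1}$ occupy the first $n$ positions and hence that the majorization matrix $M(\mathcal{A})$ is exactly the leading $n\times n$ block of $\bar A$. This bookkeeping is the main (and essentially only) obstacle; everything else is a direct invocation of previously proved results.
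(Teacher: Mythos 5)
Your proposal is correct, but for the equivalence with $(c)$ it takes a genuinely different route from the paper. The paper handles $(a)\iff(b)$ exactly as you do, by citing Lemma~\ref{row diagonal lemma}; for $(a)\iff(c)$, however, it never passes through the $N$-dimensional auxiliary problem at all. Instead it uses the identity $\mathcal{A}x^{m-1}=M(\mathcal{A})x^{[m-1]}$ to identify $TCP(q,\mathcal{A})$ directly with the $n$-dimensional problem $LCP(q,M(\mathcal{A}))$ under the substitution $y=x^{[m-1]}$ (a bijection of $\mathbb{R}^n_+$ onto itself since $m-1$ is odd), and then invokes Ingleton's Theorem~\ref{matrix w uniqueness result} on $M(\mathcal{A})$ in both directions. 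Your argument instead verifies that row diagonality forces $A=(M(\mathcal{A})\;\;O)$, hence $\bar A=\left(\begin{smallmatrix} M(\mathcal{A}) & O\\ O & O\end{smallmatrix}\right)$, and then reads off $(a)\iff(b)\iff(c)$ from Theorem~\ref{majorization adequate implies omega uniqueness of TCP}. The one piece of real content in your route --- that the modified graded lexicographic order places $x_1^{m-1},\ldots,x_n^{m-1}$ in the first $n$ positions and that all mixed-monomial coefficients $C^i_\alpha$ vanish for a row diagonal tensor, so that $B=O$ --- is correct and is an observation the paper does not make explicit in this theorem. What each approach buys: yours is more economical, reusing the already-proved five-way equivalence and requiring only the block-structure check (and it additionally yields the column adequacy of $\bar A$ and the $w$-uniqueness of the auxiliary LCP as byproducts); the paper's is more elementary and self-contained, avoiding the detour through $\mathbb{R}^N$ and not depending on the heavier machinery of Theorem~\ref{majorization adequate implies omega uniqueness of TCP}. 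Both ultimately rest on the same matrix-level fact, Theorem~\ref{matrix w uniqueness result}.
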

\begin{proof}
$(a) \iff (b) : $ By Lemma \ref{row diagonal lemma}.

\noindent $(a)\implies (c) : $
Since $\mathcal{A}$ is row diagonal, we have $\mathcal{A} x^{m-1}= M(\mathcal{A}) x^{[m-1]}$. Then the $TCP(q, \mathcal{A})$ is equivalent to the following $LCP(q, M(\mathcal{A}))$
\[ y\geq 0, \;\; M(\mathcal{A})y +q \geq 0, \;\; y^T (M(\mathcal{A})y +q)^T =0, \]
where $y=x^{[m-1]} = (x_1^{m-1},\; x_2^{m-1},\; ...,\; x_n^{m-1})^T$. Since $ M(\mathcal{A})$ is column adequate matrix, $LCP(q, M(\mathcal{A}))$ has $w$-unique solution. Hence $TCP(q, \mathcal{A})$ has $\omega$-unique solution.

\noindent $(c)\implies (a) : $
Let $\mathcal{A}$ be row diagonal tensor and $TCP(q, \mathcal{A})$ has $\omega$-unique solution. This implies $LCP(q, M(\mathcal{A}))$ has $w$-unique solution. Then by Theorem \ref{matrix w uniqueness result} we obtain $M(\mathcal{A})$ is column adequate matrix.
\end{proof}

\section{Conclusion}
In this paper we introduce column adequate tensor in the context of tensor complementarity problem. We establish several tensor theoritic properties. We define auxiliary matrix of the tensor and propose auxiliary linear complementarity problem related to tensor complementarity problem. We study the $\omega$-solution of tensor complementarity problem based on the corresponding auxiliary linear complementarity problem. We show that if $\mathcal{A}\in T_{m,n}$ is even ordered row-diagonal column adequate tensor then $TCP(q, \mathcal{A})$ has $\omega$-unique solution. We illustrate the results by various examples.

\section{Acknowledgment}
The author A. Dutta is thankful to the Department of Science and technology, Govt. of India, INSPIRE Fellowship Scheme for financial support. The author R. Deb is thankful to the Council of Scientific $\&$ Industrial Research (CSIR), India, Junior Research Fellowship scheme for financial support.

\bibliographystyle{plain}
\bibliography{document}

\end{document}